\theoremstyle{plain}
\newtheorem{theorem}{Theorem}[section]
\newtheorem{prop}[theorem]{Proposition}
\theoremstyle{definition}
\theoremstyle{remark}
\newcommand{\bbC}{\mathbb{C}}
\newcommand{\bbD}{\mathbb{D}}
\newcommand{\bbN}{\mathbb{N}}
\newcommand{\eitheta}{e^{i\theta}}
\title[]{An Electrostatic Interpretation of the Zeros of Paraorthogonal Polynomials on the Unit Circle}
\author[]{Brian Simanek}
\date{}
\begin{document}
\maketitle

\begin{abstract}
We show that if $\mu$ is a probability measure with infinite support on the unit circle having no singular component and a differentiable weight, then the corresponding paraorthogonal polynomial $\Phi_n(z;\beta)$ solves an explicit second order linear differential equation.  We also show that if $\tau\neq\beta$, then the pair $(\Phi_n(z;\beta),\Phi_n(z;\tau))$ solves an explicit first order linear system of differential equations.  One can use these differential equations to deduce that the zeros of every paraorthogonal polynomial mark the locations of a set of particles that are in electrostatic equilibrium with respect to a particular external field.
\end{abstract}

\vspace{4mm}

\footnotesize\noindent\textbf{Keywords:} Paraorthogonal polynomials, Generalized Lam\'{e} differential equation, Electrostatic equilibrium

\vspace{2mm}

\noindent\textbf{Mathematics Subject Classification:} Primary 42C05; Secondary 34M05, 78A30

\vspace{2mm}

\normalsize

\section{Introduction}\label{intro}

Given a positive probability measure $\mu$ with compact and infinite support in the complex plane $\bbC$, one can constuct the corresponding sequence of orthonormal polynomials $\{\varphi_n(z;\mu)\}_{n\geq0}$ where $\varphi_n(z;\mu)$ is a polynomial of degree exactly $n$ having positive leading coefficient, which we denote by $\kappa_n(\mu)$.  If we divide $\varphi_n(z;\mu)$ by $\kappa_n(\mu)$, then we obtain a monic degree $n$ polynomial, which we denote by $\Phi_n(z;\mu)$.  We will often suppress the $\mu$ dependence in our notation if there is no possibility for confusion.  Our study of these objects is motivated by the fact that orthogonal polynomials have proven to be valuable tools in the study of physical models.

The most well-studied collections of orthogonal polynomials come from measures of orthogonality supported on the real line and are known as orthogonal polynomials on the real line (OPRL).  Such polynomials have a variety of applications in spectral theory (see \cite{Rice}), potential theory (see \cite{SaffTot}), and the theory of special functions (see \cite{IsmBook}).  One of the key features of OPRL is the three-term recurrence formula satisfied by the orthonormal polynomials.  Many well-known families of OPRL (including the Hermite polynomials, the Laguerre polynomials, and the Jacobi polynomials) are polynomial solutions to particular families of linear second order differential equations with polynomial coefficients.  This property makes these particular families of polynomials especially applicable as we will discuss later in more detail.

A second collection of well-studied orthogonal polynomials comes from measures of orthogonality supported on the unit circle and are known as orthogonal polynomials on the unit circle (OPUC).  Just as OPRL has a close connection with self-adjoint operators, OPUC has a close connection with unitary operators (see \cite{Rice}).  In analogy with the three-term recurrence satisfied by the OPRL, the monic OPUC satisfy the Szeg\H{o} recursion:
\[
\Phi_{n+1}(z;\mu)=z\Phi_n(z;\mu)-\bar{\alpha}_n\Phi_n^*(z;\mu),
\]
where $\alpha_n\in\bbD:=\{z:|z|<1\}$ and $\Phi_n^*(z)=z^n\overline{\Phi_n(1/\bar{z})}$.  Therefore, to each infinitely supported probability measure $\mu$ on the unit circle, we can associate the sequence $\{\alpha_n\}_{n\geq0}$ of \textit{Verblunsky coefficients}.  A theorem of Verblunsky states that such a sequence also determines an infinitely supported probability measure (see \cite[Chapter 1]{OPUC1}).  The utility and applicability of the Hermite, Laguerre, and Jacobi polynomials inspired interest in families of OPUC that are solutions to linear second order differential equations.  To this end, Ismail and Witte proved the following result in \cite{IW}:

\begin{theorem}[Ismail $\&$ Witte, 2001]
Let $w(z)=e^{-v(z)}$ be differentiable in a neighborhood of the unit circle, have moments of all integral orders, and assume that the integrals
\[
\int_{|\zeta|=1}\frac{v'(z)-v'(\zeta)}{z-\zeta}\zeta^nw(\zeta)\frac{d\zeta}{i\zeta}
\]
exist for all integers $n$.  Then the corresponding orthonormal polynomials satisfy the differential relation
\begin{align*}
\varphi_n'(z)&=n\frac{\kappa_{n-1}}{\kappa_n}\varphi_{n-1}(z)-i\varphi_n^*(z)\int_{|\zeta|=1}\frac{v'(z)-v'(\zeta)}{z-\zeta}\varphi_n(\zeta)\overline{\varphi_n^*(\zeta)}w(\zeta)d\zeta\\
&\qquad\qquad+i\varphi_n(z)\int_{|\zeta|=1}\frac{v'(z)-v'(\zeta)}{z-\zeta}\varphi_n(\zeta)\overline{\varphi_n(\zeta)}w(\zeta)d\zeta.
\end{align*}
\end{theorem}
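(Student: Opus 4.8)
The plan is to expand $\varphi_n'$ in the orthonormal basis $\{\varphi_k\}_{k=0}^{n-1}$ — which spans the polynomials of degree $\le n-1$ and hence contains $\varphi_n'$ — so that the statement reduces to identifying the coefficients $c_k:=\langle\varphi_n',\varphi_k\rangle=\int_{|\zeta|=1}\varphi_n'(\zeta)\overline{\varphi_k(\zeta)}w(\zeta)\frac{d\zeta}{i\zeta}$. To bring the logarithmic derivative $v'=-w'/w$ into play — this is the usual semiclassical / ladder-operator mechanism — I would integrate by parts around the circle. Writing $\overline{\varphi_k(\zeta)}=\zeta^{-k}\varphi_k^*(\zeta)$ on $|\zeta|=1$ and using $w'(\zeta)=-v'(\zeta)w(\zeta)$, the boundary term vanishes by periodicity and one is left with $-\frac1i\int_{|\zeta|=1}\varphi_n(\zeta)w(\zeta)\zeta^{-k-1}\bigl[\varphi_k^{*\prime}(\zeta)-(k+1)\zeta^{-1}\varphi_k^*(\zeta)-v'(\zeta)\varphi_k^*(\zeta)\bigr]d\zeta$. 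Against $\varphi_n(\zeta)w(\zeta)$ the purely polynomial brackets produce only moments $\langle\varphi_n,z^j\rangle$ with $j<n$, which vanish — except that for $k=n-1$ the term $(k+1)\zeta^{-1}\varphi_k^*$ has a single surviving monomial, carrying the constant coefficient $\kappa_{n-1}$ of $\varphi_{n-1}^*$, and it pairs with $\langle\varphi_n,z^n\rangle=\kappa_n^{-1}$. The upshot is $c_k=\langle v'\varphi_n,\varphi_k\rangle$ for $k\le n-2$ and $c_{n-1}=n\kappa_{n-1}/\kappa_n+\langle v'\varphi_n,\varphi_{n-1}\rangle$, so that $\varphi_n'(z)=\tfrac{n\kappa_{n-1}}{\kappa_n}\varphi_{n-1}(z)+\sum_{k=0}^{n-1}\langle v'\varphi_n,\varphi_k\rangle\varphi_k(z)$.

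Next I would resum the last sum. It equals $\int_{|\zeta|=1}K_{n-1}(z,\zeta)\,v'(\zeta)\varphi_n(\zeta)w(\zeta)\frac{d\zeta}{i\zeta}$, where $K_{n-1}(z,\zeta)=\sum_{k=0}^{n-1}\varphi_k(z)\overline{\varphi_k(\zeta)}$ is the Christoffel--Darboux kernel. Since $\int_{|\zeta|=1}K_{n-1}(z,\zeta)\varphi_n(\zeta)w(\zeta)\frac{d\zeta}{i\zeta}=\sum_k\varphi_k(z)\langle\varphi_n,\varphi_k\rangle=0$, one may freely replace $v'(\zeta)$ by $v'(\zeta)-v'(z)$. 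Then the Christoffel--Darboux formula, specialized to $|\zeta|=1$ by $\bar\zeta=1/\zeta$, gives $K_{n-1}(z,\zeta)=\zeta\bigl[\overline{\varphi_n^*(\zeta)}\varphi_n^*(z)-\overline{\varphi_n(\zeta)}\varphi_n(z)\bigr]/(\zeta-z)$; the factor $(\zeta-z)^{-1}$ combines with $v'(\zeta)-v'(z)$ into the difference quotient $\tfrac{v'(z)-v'(\zeta)}{z-\zeta}$, the identity $\zeta\cdot\frac{d\zeta}{i\zeta}=-i\,d\zeta$ supplies the constants $\mp i$, and the two summands split off as $\varphi_n^*(z)$ and $\varphi_n(z)$ times the two stated integrals. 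Reassembling yields exactly the asserted differential relation.

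What is genuinely routine here is the algebra of the $*$-operation and the tracking of signs and of the inner-product normalization (a power of $2\pi$, according to convention), which I would pin down first on the model weight $v(\zeta)=\lambda(\zeta+\zeta^{-1})$. The step that actually consumes the hypotheses — and the one I expect to be the main obstacle — is legitimizing the integration by parts and the reduction $c_k=\langle v'\varphi_n,\varphi_k\rangle$ when $w$ is merely differentiable rather than a Laurent polynomial: the function $v'(\zeta)\varphi_k(\zeta)w(\zeta)$ is not a Laurent polynomial, so one must know its moments are finite, and after writing $v'(\zeta)=\bigl(v'(\zeta)-v'(z)\bigr)+v'(z)$ this is precisely the assumption that $\int_{|\zeta|=1}\tfrac{v'(z)-v'(\zeta)}{z-\zeta}\zeta^m w(\zeta)\frac{d\zeta}{i\zeta}$ exists for all $m$, together with the existence of the ordinary moments of $w$. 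So the analytic content of the proof is to check that these two standing hypotheses make every intermediate integral absolutely convergent and every interchange of finite sum and integral valid; granting that, the identity is an exact, finite computation.
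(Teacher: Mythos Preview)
This theorem is not proved in the paper; it is quoted from Ismail and Witte to motivate the paper's own results. Your approach is nonetheless correct, and it is exactly the strategy the paper itself attributes to Ismail--Witte and then follows in the first half of its proof of its main theorems on paraorthogonal polynomials: expand the derivative in the orthonormal basis $\{\varphi_k\}_{k\le n-1}$, integrate by parts to bring in $w'$ (equivalently $v'=-w'/w$), resum the resulting coefficients as an integral against the reproducing kernel $K_{n-1}(z,\zeta)$, and then apply the Christoffel--Darboux identity to split off the factors $\varphi_n(z)$ and $\varphi_n^*(z)$. The only visible differences are cosmetic---the paper works with $w'(\theta)$ rather than $v'$, and it quotes the kernel formula from Wong rather than the standard Christoffel--Darboux form you used; these are equivalent. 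Your identification of where the standing hypotheses are actually consumed (legitimizing the integration by parts and the finiteness of the intermediate moment integrals) is also on the mark.
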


\noindent From this result, one can derive a second order differential equation to which $\varphi_n(z)$ is a solution; we refer the reader to \cite{IW} for details.
The main purpose of our investigation is to derive a comparable result for a related class of polynomials called \textit{paraorthogonal polynomials on the unit circle} (POPUC).

Given an infinitely supported probability measure $\mu$ on the unit circle and a complex number $\beta$ of modulus $1$, we define the paraorthogonal polynomial $\Phi_n(z;\beta;\mu)$ as the monic degree $n$ polynomial given by
\begin{align}\label{popucdef}
\Phi_n(z;\beta;\mu):=z\Phi_{n-1}(z;\mu)-\bar{\beta}\Phi_{n-1}^*(z;\mu).
\end{align}
Paraorthogonal polynomials and their zeros have received considerable recent attention from the research community (see for example \cite{DY,Gol,NJT,KiSt,FineIV,SimaGap,RankOne,Stoiciu,Wong}).  It is well-known and easy to show that all of the zeros of $\Phi_n(z;\beta;\mu)$ are distinct and lie on the unit circle.  Furthermore, if $\tau\neq\beta$ are distinct complex numbers of modulus $1$, then the zeros of $\Phi_n(z;\beta;\mu)$ and $\Phi_n(z;\tau;\mu)$ strictly interlace on the unit circle in that if $x$ and $y$ are two zeros of $\Phi_n(z;\beta;\mu)$ and $[x,y]$ is the arc of the unit circle that runs from $x$ to $y$ in the counter-clockwise direction, then $[x,y]\setminus\{x,y\}$ contains a zero of $\Phi_n(z;\tau;\mu)$.  Although paraorthogonal polynomials are not orthogonal polynomials, they often serve as an appropriate analog of OPRL in settings where the real line is replaced by the unit circle (see for example \cite{KiSt,Stoiciu}).  One of our main results is the next theorem, which can be thought of as an analog of the Ismail and Witte result that applies to paraorthogonal polynomials.  In fact, the theorem applies to any degree $n$ monic polynomial satisfying the relation (\ref{popucdef}) for some complex number $\beta$.  As in \cite{OPUC1}, we write a measure on the unit circle as a measure in the variable $\theta\in[0,2\pi)$.  For brevity and because there is no possibility for confusion, we will suppress the $\mu$-dependence of various quantities in our notation.  

\begin{theorem}\label{mainode}
Suppose $d\mu(\theta)=w(\theta)\frac{d\theta}{2\pi}$ is a probability measure on the unit circle, where $w$ is continuous on $[0,2\pi]$ (mod $2\pi$) and differentiable on $(0,2\pi)$ and let $\{\alpha_n\}_{n=0}^{\infty}$ be the corresponding sequence of Verblunsky coefficients.  If $\beta\in\bbC$, then the polynomial $y(z)=\Phi_n(z;\beta)$ defined by (\ref{popucdef}) solves the following differential equation on any domain including infinity or zero on which the coefficients are meromorphic:
\begin{align}\label{keyode}
&0=y''(z)+\bigg[\frac{1-n}{z}-\frac{h_n'(z;\beta;\beta)}{h_n(z;\beta;\beta)}\bigg]y'(z)\\
\nonumber&+\bigg[\frac{W[h_n(z;\beta;\beta),h_n(z;-\beta;\beta)]}{2\bar{\beta}zh_n(z;\beta;\beta)}-\frac{1}{z}\left((n+zG_n(z))G_n(z)+J_n(z)(D_n(z)-n\alpha_{n-1})\right)\bigg]y(z),
\end{align}
where
\begin{align*}
G_n(z):&=i\int_0^{2\pi}\frac{|\varphi_{n-1}^*(\eitheta)|^2w'(\theta)}{(z-\eitheta)}\frac{d\theta}{2\pi}\\
D_n(z):&=-iz\int_0^{2\pi}\frac{\varphi_{n-1}^*(\eitheta)^2w'(\theta)}{(z-\eitheta)e^{in\theta}}\frac{d\theta}{2\pi}\\
J_n(z):&=i\int_0^{2\pi}\frac{\varphi_{n-1}(\eitheta)^2w'(\theta)}{(z-\eitheta)e^{i(n-2)\theta}}\frac{d\theta}{2\pi}\\
h_n(z;x;y):&=\bar{x}(n(1-\bar{y}\alpha_{n-1})+zG_n(z)+\bar{y}D_n(z))-z(J_n(z)-\bar{y}G_n(z)),
\end{align*}
and  $W[f,g]$ denotes the Wronskian of $f$ and $g$ (that is, $W[f,g]=fg'-gf'$).
\end{theorem}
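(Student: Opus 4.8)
The plan is to derive a first-order differential relation for the paraorthogonal polynomial $\Phi_n(z;\beta)$ analogous to the Ismail--Witte relation, and then bootstrap a second such relation so that eliminating the ``companion'' function yields the second-order equation \eqref{keyode}. First I would recall that $\Phi_n(z;\beta)=z\Phi_{n-1}(z)-\bar\beta\Phi_{n-1}^*(z)$, so differentiating reduces everything to control of $\Phi_{n-1}'$ and $(\Phi_{n-1}^*)'$. For these I would set up the Cauchy-integral machinery used by Ismail and Witte: writing $\varphi_{n-1}'(z)$ against the reproducing kernel and integrating by parts the term $\int w'(\theta)\,\overline{\varphi_{n-1}}$ over the circle (this is where continuity of $w$ on $[0,2\pi]\pmod{2\pi}$ and differentiability on $(0,2\pi)$ enter, killing the boundary term). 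This produces relations of the schematic form $\varphi_{n-1}'(z)=A(z)\varphi_{n-1}(z)+B(z)\varphi_{n-1}^*(z)$ and $(\varphi_{n-1}^*)'(z)=C(z)\varphi_{n-1}(z)+E(z)\varphi_{n-1}^*(z)$, where $A,B,C,E$ are built from the Cauchy transforms $G_n$, $D_n$, $J_n$ of $|\varphi_{n-1}^*|^2 w'$, $\varphi_{n-1}^{*2}w'$, $\varphi_{n-1}^{2}w'$ respectively (the three integrals displayed in the statement), together with the recursion constant $\alpha_{n-1}$ and the degree $n$. Translating from $\varphi$ to $\Phi$ (dividing by $\kappa_{n-1}$) and using $\Phi_n=z\Phi_{n-1}-\bar\beta\Phi_{n-1}^*$ then gives a $2\times2$ first-order linear system for the vector $(\Phi_{n-1},\Phi_{n-1}^*)$, hence for $(\Phi_n(z;\beta),\Phi_n(z;\tau))$ for any $\tau\ne\beta$ after a change of basis --- this is the ``first order linear system'' promised in the abstract, and the functions $h_n(z;x;y)$ are exactly the coefficients that appear after this change of basis, with $h_n(z;\beta;\beta)$ and $h_n(z;-\beta;\beta)$ the two entries relevant to $y=\Phi_n(z;\beta)$.

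Next I would specialize: take $\tau=-\beta$ (or equivalently extract from the system the closed relation $y'(z)=P(z)y(z)+Q(z)\,\Phi_n(z;-\beta)$, where $P,Q$ are rational in $G_n,D_n,J_n$), so that the companion polynomial $\Phi_n(z;-\beta)$ plays the role that $\varphi_{n-1}^*$ plays in Ismail--Witte. Differentiating this relation once more and substituting the analogous relation for $\Phi_n(z;-\beta)'$ lets me eliminate $\Phi_n(z;-\beta)$ algebraically: solving the first relation for $\Phi_n(z;-\beta)$ in terms of $y$ and $y'$ and plugging into the differentiated relation produces a second-order ODE for $y$ alone. The coefficient of $y'$ that falls out is $\frac{1-n}{z}-\frac{h_n'(z;\beta;\beta)}{h_n(z;\beta;\beta)}$ (the $\frac{1-n}{z}$ coming from the factor $z$ in $\Phi_n=z\Phi_{n-1}-\dots$ and the degree bookkeeping, the logarithmic-derivative term from solving for the companion), and the coefficient of $y$ assembles into the Wronskian expression $\frac{W[h_n(z;\beta;\beta),h_n(z;-\beta;\beta)]}{2\bar\beta z\,h_n(z;\beta;\beta)}$ minus the ``potential'' term $\frac1z\big((n+zG_n)G_n+J_n(D_n-n\alpha_{n-1})\big)$. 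Recognizing that the messy rational combination is precisely a Wronskian of two $h_n$'s is the natural bookkeeping device; I would verify it by expanding both sides. Finally I would note that the claim is only that $y$ solves this equation on any domain (containing $0$ or $\infty$) where the coefficients are meromorphic, so no global analyticity of the Cauchy transforms is needed --- one just checks the meromorphy of $G_n,D_n,J_n$ near $0$ and $\infty$, which follows from $w'$ being integrable and the geometric-series expansion of $1/(z-e^{i\theta})$.

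The main obstacle I expect is the integration-by-parts/boundary-term step and the careful algebraic identification of the coefficients. The Cauchy-transform manipulation must correctly account for the fact that $w$ is only differentiable on the open interval $(0,2\pi)$ and merely continuous (mod $2\pi$) at the endpoints, so the boundary terms in $\int_0^{2\pi} w'(\theta)(\cdots)'\,d\theta$ genuinely cancel only because the integrand is periodic and continuous; getting the constants ($n$, $\alpha_{n-1}$, factors of $i$ and $z$) exactly right in $G_n$, $D_n$, $J_n$ and $h_n$ is where the real work lies. A secondary subtlety is that $\Phi_n^*(z;\beta)$ for a paraorthogonal polynomial is not independent of $\Phi_n(z;\beta)$ --- indeed $\Phi_n^*(z;\beta)=\bar\beta\Phi_n(z;\bar\beta\cdot)$-type reflections hold --- so I must make sure the ``companion'' I eliminate is genuinely an independent second solution direction (this is why $\Phi_n(z;-\beta)$, whose zeros interlace those of $\Phi_n(z;\beta)$, is the right choice, and why $\tau\ne\beta$ is assumed in the system version). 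Once the first-order system is nailed down correctly, the passage to the second-order ODE is linear algebra and differentiation, routine but lengthy, and I would relegate the brute-force verification of the Wronskian identity to a computation rather than spelling it out inline.
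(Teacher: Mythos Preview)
Your plan is correct in outline and would prove the theorem, but it differs from the paper's argument in two places worth flagging.

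\textbf{Deriving the first-order relation.} You propose to differentiate $\Phi_n(z;\beta)=z\Phi_{n-1}-\bar\beta\Phi_{n-1}^*$ and then obtain separate Ismail--Witte-type formulas for $\Phi_{n-1}'$ and $(\Phi_{n-1}^*)'$. That works, but the natural kernel there is $K_{n-2}$, so the Cauchy transforms that fall out are weighted by $\varphi_{n-2}$, not $\varphi_{n-1}$; you would then need a round of Szeg\H{o} recursion to land on the stated $G_n,D_n,J_n$. The paper avoids this detour by expanding $\Phi_n'(z;\beta)$ itself (a polynomial of degree $n-1$) in $\{\varphi_0,\dots,\varphi_{n-1}\}$, integrating by parts once, and invoking the Christoffel--Darboux form of $K_{n-1}$. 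This yields directly
\[
\Phi_n'(z;\beta)=\Phi_{n-1}(z)\bigl(n(1-\bar\beta\alpha_{n-1})+zG_n+\bar\beta D_n\bigr)-\Phi_{n-1}^*(z)\bigl(J_n-\bar\beta G_n\bigr),
\]
which after the linear change of basis $(\Phi_{n-1},\Phi_{n-1}^*)\mapsto(\Phi_n(\cdot;\beta),\Phi_n(\cdot;\tau))$ gives the first-order system with coefficients $h_n(z;\cdot;\cdot)/\bigl(z(\bar\beta-\bar\tau)\bigr)$.

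\textbf{Eliminating the companion.} You specialize to $\tau=-\beta$ and eliminate $\Phi_n(z;-\beta)$. The paper instead keeps $\tau$ free, writes down the resulting $\tau$-dependent second-order ODE for $\Phi_n(z;\beta)$, and then \emph{integrates that ODE over $\tau\in\partial\bbD$ against $d\tau/(2\pi i)$} using $\bar\tau=1/\tau$ and Cauchy's formula. Both routes give exactly \eqref{keyode}, and the reason is that the normalized second-order ODE is in fact independent of $\tau$: since $h_n(z;x;y)$ is affine in $\bar x$ and in $\bar y$, one checks that $h_n(z;\tau;\beta)-h_n(z;\beta;\tau)=-n(\bar\beta-\bar\tau)$, that $W[h_n(z;\beta;\beta),h_n(z;\tau;\beta)]/(\bar\beta-\bar\tau)$ is $\tau$-free (and equals $W[h_n(z;\beta;\beta),h_n(z;-\beta;\beta)]/(2\bar\beta)$), and that the remaining ``determinant'' term $(ad-bc)/(\bar\beta-\bar\tau)^2$ collapses to $-z\bigl((n+zG_n)G_n+J_n(D_n-n\alpha_{n-1})\bigr)$. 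So your choice $\tau=-\beta$ is not a lucky special value but simply one representative of a $\tau$-independent identity; your argument is a legitimate shortcut to what the paper obtains by averaging. Either way the algebra is routine once the first-order system is in hand.
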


By allowing the parameter $\beta$ to be arbitrarily chosen in $\bbC$, we can apply our results to a wide variety of polynomials including paraorthogonal polynomials and perturbations of orthogonal polynomials without a precise understanding of the induced perturbation to the measure of orthogonality.

In our applications of Theorem \ref{mainode}, we will focus on the case $|\beta|=1$ so that the polynomial $\Phi_n(z;\beta;\mu)$ is a paraorthogonal polynomial.  Notice that if we set $\beta=\alpha_{n-1}$, then $\Phi_n(z;\beta;\mu)$ is just the monic orthogonal polynomial $\Phi_n(z;\mu)$.  In this case, Theorem \ref{mainode} yields a second order differential equation solved by $\Phi_n(z;\mu)$.  If we set $\beta=0$, then Theorem \ref{mainode} yields a second order differential equation solved by $z\Phi_{n-1}(z;\mu)$, which can be rewritten as a differential equation solved by $\Phi_{n-1}(z;\mu)$.  There is no reason to think that the second order differential equations solved by $\Phi_n(z;\mu)$ derived by these two methods will be the same as each other or the same as the one produced by the results in \cite{IW} if those results apply.  In fact, we will see by example in Section \ref{sntm} that these methods may yield different differential equations.  It is easy to see that the polynomial $\Phi_n(z;\mu)$ satisfies many differential equations if one allows the complexity of the coefficients to grow with $n$.  Indeed, if
\[
\Phi_n''(z;\mu)+P(z)\Phi_n'(z;\mu)+Q(z)\Phi_n(z;\mu)=0,
\]
and $R(z)$ is any entire meromorphic function, then
\[
\Phi_n''(z;\mu)+\Phi_n'(z;\mu)\left(P(z)+R(z)\Phi_n(z;\mu)\right)+\Phi_n(z;\mu)\left(Q(z)-R(z)\Phi_n'(z;\mu)\right)=0.
\]
In the next section we will discuss applications of Theorem \ref{mainode} to electrostatics.  In that context, the coefficients in the second order differential equation determine the location of charges that generate an electric field and the zeros of $\Phi_n(z;\beta;\mu)$ mark the location of charges in equilibrium with respect to that field.  It is not surprising that a particular configuration of charges can be in equilibrium with respect to different electric fields, so from a physical perspective, the lack of uniqueness of the differential equation is also expected.

\medskip

An intermediate step in the proof of Theorem \ref{mainode} is of interest in its own right and can be stated as follows:

\begin{theorem}\label{sysodes}
Suppose $\mu$ is as in Theorem \ref{mainode} and $\tau,\beta$ are distinct complex numbers.  The polynomials $u(z):=\Phi_n(z;\beta,\mu)$ and $v(z):=\Phi_n(z;\tau,\mu)$ solve the following system of differential equations on any domain containing infinity or zero on which the coefficients are meromorphic:
\begin{align}
\label{sys1thm}u'(z)&=v(z)\left(\frac{h_n(z;\beta;\beta)}{z(\bar{\beta}-\bar{\tau})}\right)-u(z)\left(\frac{h_n(z;\tau;\beta)}{z(\bar{\beta}-\bar{\tau})}\right)\\
\label{sys2thm}v'(z)&=v(z)\left(\frac{h_n(z;\beta;\tau)}{z(\bar{\beta}-\bar{\tau})}\right)-u(z)\left(\frac{h_n(z;\tau;\tau)}{z(\bar{\beta}-\bar{\tau})}\right).
\end{align}
\end{theorem}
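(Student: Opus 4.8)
The plan is to compute the derivatives $u'(z)$ and $v'(z)$ directly from the definition (\ref{popucdef}), express everything in terms of the orthonormal polynomials $\varphi_{n-1}$ and $\varphi_{n-1}^*$ and their derivatives, and then use a Christoffel--Darboux type identity together with the integral representations $G_n, D_n, J_n$ to close the system. First I would recall that $\Phi_n(z;\beta)=z\Phi_{n-1}(z)-\bar\beta\Phi_{n-1}^*(z)$, so differentiating gives $u'(z)=\Phi_{n-1}(z)+z\Phi_{n-1}'(z)-\bar\beta (\Phi_{n-1}^*)'(z)$, and similarly for $v'$. The key point is that $\Phi_{n-1}'$ and $(\Phi_{n-1}^*)'$ are \emph{not} polynomials of controllable degree on their own, but an Ismail--Witte-style argument (integration by parts against the reproducing kernel, using the differentiability of $w$) shows that $z\Phi_{n-1}'(z)$ and $z(\Phi_{n-1}^*)'(z)$ can each be written as an explicit linear combination of $\Phi_{n-1}(z)$ and $\Phi_{n-1}^*(z)$ with coefficients built from the Cauchy-type transforms $G_n(z)$, $D_n(z)$, $J_n(z)$ and the Verblunsky coefficient $\alpha_{n-1}$. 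This is the technical heart of the matter and is essentially the adaptation of Theorem (Ismail \& Witte) to the monic normalization and to the $*$-polynomial; I expect the cleanest route is to start from $\varphi_{n-1}'(z) = (n-1)\tfrac{\kappa_{n-2}}{\kappa_{n-1}}\varphi_{n-2}(z) + (\text{integral terms})$, eliminate $\varphi_{n-2}$ via the Szeg\H{o} recursion for $\varphi_{n-1}$, and repackage the resulting expression; analogous manipulations handle $(\varphi_{n-1}^*)'$.

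Once one has, say,
\begin{align*}
z\Phi_{n-1}'(z) &= A(z)\Phi_{n-1}(z) + B(z)\Phi_{n-1}^*(z),\\
z(\Phi_{n-1}^*)'(z) &= C(z)\Phi_{n-1}(z) + E(z)\Phi_{n-1}^*(z),
\end{align*}
with $A,B,C,E$ explicit in $G_n,D_n,J_n,\alpha_{n-1}$, one gets
\[
z u'(z) = z\Phi_{n-1}(z) + A(z)\Phi_{n-1}(z)+B(z)\Phi_{n-1}^*(z) - \bar\beta\bigl(C(z)\Phi_{n-1}(z)+E(z)\Phi_{n-1}^*(z)\bigr),
\]
so $zu'(z)$ is an explicit linear combination of $\Phi_{n-1}(z)$ and $\Phi_{n-1}^*(z)$. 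The next step is purely linear-algebraic: for two distinct parameters $\beta\neq\tau$, the pair of relations $u = z\Phi_{n-1}-\bar\beta\Phi_{n-1}^*$ and $v = z\Phi_{n-1}-\bar\tau\Phi_{n-1}^*$ can be inverted to express $z\Phi_{n-1}(z)$ and $\Phi_{n-1}^*(z)$ in terms of $u(z)$ and $v(z)$, the inversion being valid precisely because the determinant $\bar\tau-\bar\beta\neq 0$ — this is exactly where the hypothesis $\tau\neq\beta$ enters and explains the factor $z(\bar\beta-\bar\tau)$ in the denominators of (\ref{sys1thm})--(\ref{sys2thm}). Substituting back gives $zu'(z)$ as a linear combination of $u(z)$ and $v(z)$, and a careful bookkeeping of the coefficients must reproduce exactly $h_n(z;\beta;\beta)/(\bar\beta-\bar\tau)$ and $-h_n(z;\tau;\beta)/(\bar\beta-\bar\tau)$; the same computation with the roles reversed yields $v'(z)$ and the coefficients $h_n(z;\beta;\tau)$, $h_n(z;\tau;\tau)$. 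Matching the combination $A-\bar\beta C + z$ against the stated definition $h_n(z;x;y)=\bar x(n(1-\bar y\alpha_{n-1})+zG_n(z)+\bar y D_n(z)) - z(J_n(z)-\bar y G_n(z))$ is really just a definition-unwinding once $A,B,C,E$ are in hand — note the asymmetric roles of the two parameter slots $x$ and $y$ mirror the fact that $\beta$ appears both in ``which polynomial'' ($\Phi_{n-1}^*$ coefficient) and in the inverted expression for $\Phi_{n-1}^*$.

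The main obstacle, and the step where the most care is needed, is deriving the explicit forms of $A,B,C,E$ — that is, showing $z\Phi_{n-1}'(z)$ and $z(\Phi_{n-1}^*)'(z)$ lie in the span of $\{\Phi_{n-1},\Phi_{n-1}^*\}$ with the claimed coefficients. Two subtleties arise: one must justify interchanging differentiation and integration and the integration-by-parts step using only continuity of $w$ on $[0,2\pi]$ (mod $2\pi$) and differentiability on $(0,2\pi)$ — this is handled on the domains ``including infinity or zero on which the coefficients are meromorphic,'' which is why the theorem is stated with that caveat — and one must correctly track the conversion between the orthonormal normalization used in the Ismail--Witte identity and the monic normalization, including the ratios $\kappa_{n-2}/\kappa_{n-1}$, which are eliminated using $\kappa_{n-1}^2/\kappa_n^2 = 1-|\alpha_{n-1}|^2$ and the Szeg\H{o} recursion. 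Once these coefficient identities are nailed down, the remainder of the proof is the short linear-algebra inversion described above, and (\ref{sys1thm})--(\ref{sys2thm}) follow by direct substitution.
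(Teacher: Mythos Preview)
Your plan is sound and the second half---the linear-algebra inversion expressing $\Phi_{n-1}$ and $\Phi_{n-1}^*$ in terms of $u$ and $v$, then substituting back---is exactly what the paper does. The place where your route and the paper's diverge is the derivation of the key identity
\[
\Phi_n'(z;\beta)=\Phi_{n-1}(z)\bigl(n(1-\bar\beta\alpha_{n-1})+zG_n(z)+\bar\beta D_n(z)\bigr)-\Phi_{n-1}^*(z)\bigl(J_n(z)-\bar\beta G_n(z)\bigr).
\]
You propose to differentiate the defining relation $\Phi_n(z;\beta)=z\Phi_{n-1}(z)-\bar\beta\Phi_{n-1}^*(z)$, and then separately express $z\Phi_{n-1}'$ and $z(\Phi_{n-1}^*)'$ in the span of $\{\Phi_{n-1},\Phi_{n-1}^*\}$ via an Ismail--Witte computation at level $n-1$ followed by an inverse Szeg\H{o} step to eliminate $\varphi_{n-2}$. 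The paper never differentiates (\ref{popucdef}); instead it expands $\Phi_n'(z;\beta)$ itself in the orthonormal basis $\{\varphi_k\}_{k\le n-1}$, performs a single integration by parts against $w'(\theta)$, and then replaces the resulting reproducing kernel $K_{n-1}(z,\zeta)$ by the closed Christoffel--Darboux/Wong formula $K_{n-1}(z,\zeta)=\kappa_{n-1}\overline{\varphi_{n-1}(\zeta)}\,\Phi_n(z;\beta_\zeta)/(z-\zeta)$. This lands on the identity above in one pass, with the coefficients manifestly affine in $\bar\beta$, so that the two-slot function $h_n(z;x;y)$ drops out directly.

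Your approach should also succeed, but it carries more overhead: two Ismail--Witte-type computations (one for $\Phi_{n-1}'$ and one for $(\Phi_{n-1}^*)'$), an inverse Szeg\H{o} recursion to remove $\varphi_{n-2}$, the translation between the Ismail--Witte integrals (which use $v'=-(\log w)'$ with $w$ as a function of $z$) and the paper's $G_n,D_n,J_n$ (which use $w'(\theta)$ with $w$ as a function of $\theta$), and a normalization clean-up via $\kappa_{n-2}/\kappa_{n-1}$. None of these steps is wrong, but the linear-in-$\bar\beta$ structure that makes $h_n$ natural is hidden until the final recombination, whereas in the paper's argument it is visible from the outset.
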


The main restriction in the applicability of Theorems \ref{mainode} and \ref{sysodes} is the requirement that the measure be given by a continuous and differentiable weight function.  However, for any fixed $n\in\bbN$ and $\beta\in\partial\bbD$, the map from measures to degree $n$ paraorthogonal polynomials that map $0$ to $\-\bar{\beta}$ is far from injective.  In fact, in the pre-image of any such paraorthogonal polynomial is a measure that satisfies the hypotheses of the above theorems.  Indeed, we have the following result, which is a consequence of the Bernstein-Szeg\H{o} Theorem (see \cite[Theorem 1.7.8]{OPUC1})

\begin{theorem}\label{BZ}
Let $\mu$ be a probability measure with infinite support on the unit circle and suppose $\beta\in\partial\bbD$.  Then
\[
\Phi_n(z;\beta;\mu)=\Phi_n\left(z;\beta;\frac{e^{i(n-1)\theta}}{\varphi_{n-1}(\eitheta)\varphi_{n-1}^*(\eitheta)}\,\frac{d\theta}{2\pi}\right)
\]
where $\varphi_{n-1}(z)=\varphi_{n-1}(z;\mu)$.
\end{theorem}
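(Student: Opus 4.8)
\textit{Proof plan for Theorem \ref{BZ}.}
The plan is to recognize the measure on the right-hand side as a Bernstein--Szeg\H{o} measure whose first $n-1$ Verblunsky coefficients coincide with those of $\mu$, and then to read off the claimed identity directly from the defining relation (\ref{popucdef}).

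First I would put the weight into standard form. For $|z|=1$ one has $\varphi_{n-1}^*(z)=z^{n-1}\overline{\varphi_{n-1}(z)}$, so on the unit circle
\[
\varphi_{n-1}(\eitheta)\,\varphi_{n-1}^*(\eitheta)=e^{i(n-1)\theta}\,|\varphi_{n-1}(\eitheta)|^2=e^{i(n-1)\theta}\,|\varphi_{n-1}^*(\eitheta)|^2,
\]
and hence the weight appearing in the statement equals $|\varphi_{n-1}^*(\eitheta)|^{-2}$; the factor $e^{i(n-1)\theta}$ in the numerator is exactly what cancels the phase and renders the weight real and positive. Since the zeros of $\varphi_{n-1}$ lie in $\bbD$, the zeros of $\varphi_{n-1}^*$ lie outside $\bard$, so this weight is a ratio of trigonometric polynomials with nonvanishing denominator; in particular it is continuous and differentiable on all of $\partial\bbD$, and the resulting measure $\nu$ has infinite support and meets the hypotheses of Theorems \ref{mainode} and \ref{sysodes}.

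Next I would invoke the Bernstein--Szeg\H{o} Theorem \cite[Theorem 1.7.8]{OPUC1}. Writing $\{\alpha_j\}_{j\ge0}$ for the Verblunsky coefficients of $\mu$, the polynomial $\varphi_{n-1}=\varphi_{n-1}(\,\cdot\,;\mu)$ is the degree $n-1$ orthonormal polynomial determined by $\alpha_0,\dots,\alpha_{n-2}$, and the theorem identifies $\nu:=|\varphi_{n-1}^*(\eitheta)|^{-2}\,\frac{d\theta}{2\pi}$ as the probability measure with Verblunsky coefficients $(\alpha_0,\dots,\alpha_{n-2},0,0,\dots)$. Because the monic orthogonal polynomial of degree $n-1$ is built from $\alpha_0,\dots,\alpha_{n-2}$ alone via the Szeg\H{o} recursion, we obtain $\Phi_{n-1}(z;\nu)=\Phi_{n-1}(z;\mu)$ as polynomials, and therefore also $\Phi_{n-1}^*(z;\nu)=\Phi_{n-1}^*(z;\mu)$. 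Feeding these equalities into (\ref{popucdef}) gives
\[
\Phi_n(z;\beta;\nu)=z\Phi_{n-1}(z;\nu)-\bar{\beta}\,\Phi_{n-1}^*(z;\nu)=z\Phi_{n-1}(z;\mu)-\bar{\beta}\,\Phi_{n-1}^*(z;\mu)=\Phi_n(z;\beta;\mu),
\]
which is the asserted identity.

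The argument is short, and the only step that genuinely requires care is the first one: one must verify that the weight written in the statement is \emph{exactly} $|\varphi_{n-1}^*(\eitheta)|^{-2}$ rather than a positive constant multiple of it, so that the normalization of the Bernstein--Szeg\H{o} measure matches and $\nu$ is a probability measure to which (\ref{popucdef}) applies verbatim. Everything else is bookkeeping with the $*$-operation and the Szeg\H{o} recursion.
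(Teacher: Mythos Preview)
Your proposal is correct and is precisely the argument the paper has in mind: the paper does not give a proof at all beyond saying the result ``is a consequence of the Bernstein--Szeg\H{o} Theorem (see \cite[Theorem 1.7.8]{OPUC1}),'' and your write-up simply fills in those details---rewriting the weight as $|\varphi_{n-1}^*(\eitheta)|^{-2}$, identifying the Verblunsky coefficients of the resulting measure via Bernstein--Szeg\H{o}, and reading off $\Phi_{n-1}(\,\cdot\,;\nu)=\Phi_{n-1}(\,\cdot\,;\mu)$ before applying (\ref{popucdef}).
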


The following result will be relevant for our applications and is especially useful when combined with Theorem \ref{BZ}.

\begin{prop}\label{intod}
Suppose $w$ is as in Theorem \ref{mainode}.
\begin{itemize}
\item[i)] If $w'(\theta)=f(\eitheta)$ where $f$ is analytic in a neighborhood of the unit circle, then the functions $G_n$, $D_n$, and $J_n$ (defined in Theorem \ref{mainode}) can be analytically continued from $\{z:|z|>1\}$ to $\{z:|z|>r\}$ for some $r<1$.
\item[ii)] If $w'(\theta)=f(\eitheta)$ where $f$ is a rational function without poles on the unit circle, then the functions $G_n$, $D_n$, and $J_n$ (defined in Theorem \ref{mainode}) can be meromorphically continued from $\{z:|z|>1\}$ to all of $\bbC$ as rational functions with no poles except possibly at zero and the poles of $f$ inside $\bbD$.
\end{itemize}
\end{prop}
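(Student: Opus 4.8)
The plan is to rewrite each of $G_n$, $D_n$, $J_n$ as a contour integral over $|\zeta|=1$ whose integrand, for fixed $z$ with $|z|>1$, is \emph{meromorphic} in the integration variable, and then to move the contour inward. Parametrizing $\zeta=\eitheta$ gives $\frac{d\theta}{2\pi}=\frac{d\zeta}{2\pi i\zeta}$ and $e^{in\theta}=\zeta^{n}$; moreover, since $\varphi_{n-1}^{*}(z)=z^{n-1}\overline{\varphi_{n-1}(1/\bar z)}$ and $1/\bar\zeta=\zeta$ on the unit circle, one has $\overline{\varphi_{n-1}^{*}(\zeta)}=\zeta^{1-n}\varphi_{n-1}(\zeta)$ there, so $|\varphi_{n-1}^{*}(\eitheta)|^{2}=\zeta^{1-n}\varphi_{n-1}^{*}(\zeta)\varphi_{n-1}(\zeta)$. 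Substituting these identities and $w'(\theta)=f(\eitheta)$, each of the three functions takes the form $c\int_{|\zeta|=1}\frac{P(\zeta)f(\zeta)}{(z-\zeta)\zeta^{m}}\,d\zeta$ — times, for $D_n$, an extra factor $z$ — for a constant $c$, a polynomial $P$ built from $\varphi_{n-1}$ and/or $\varphi_{n-1}^{*}$ (for example $P=\varphi_{n-1}^{*}\varphi_{n-1}$, $m=n$ for $G_n$), and an integer $m\ge 0$; note the integrands of $D_n$ and $J_n$ contain no complex conjugates to begin with, so the conjugation identity is needed only for $G_n$. For $|z|>1$, the only singularities of $\zeta\mapsto\frac{P(\zeta)f(\zeta)}{(z-\zeta)\zeta^{m}}$ in the closed unit disk are $\zeta=0$ and the poles of $f$ lying in $\bbD$.

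For part (i): if $f$ is analytic in a neighborhood of $\partial\bbD$ then it is analytic on some annulus $r<|\zeta|<R$ with $r<1<R$; fix any $\rho\in(r,1)$. For every $z$ with $|z|>1$ the integrand $\frac{P(\zeta)f(\zeta)}{(z-\zeta)\zeta^{m}}$ is analytic in $\zeta$ on the closed sub-annulus $\rho\le|\zeta|\le 1$, so Cauchy's theorem lets us replace $|\zeta|=1$ by $|\zeta|=\rho$ as the contour without changing the integral. But $\int_{|\zeta|=\rho}\frac{P(\zeta)f(\zeta)}{(z-\zeta)\zeta^{m}}\,d\zeta$ is an analytic function of $z$ on all of $\{|z|>\rho\}$, since for each $\zeta$ on that contour the integrand is analytic in $z$ as long as $|z|\ne\rho$. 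By uniqueness of analytic continuation, $G_n$ — and likewise $D_n$ and $J_n$ — continues analytically from $\{|z|>1\}$ to $\{|z|>\rho\}$; letting $\rho$ range over $(r,1)$ yields the continuation to $\{|z|>r\}$ claimed in (i).

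For part (ii): if $f$ is rational with no poles on $\partial\bbD$ then, for each fixed $z$ with $|z|>1$, the integrand is rational in $\zeta$ with no pole on $|\zeta|=1$, so the residue theorem gives that $\int_{|\zeta|=1}\frac{P(\zeta)f(\zeta)}{(z-\zeta)\zeta^{m}}\,d\zeta$ equals $2\pi i$ times the sum of the residues of $\frac{P(\zeta)f(\zeta)}{(z-\zeta)\zeta^{m}}$ (in the variable $\zeta$) at $\zeta=0$ and at the finitely many poles of $f$ in $\bbD$. It remains to check that each such residue, viewed as a function of $z$, is rational in $z$ with a pole only at the corresponding point of the $\zeta$-plane: expanding $\frac{1}{z-\zeta}=\sum_{k\ge 0}\zeta^{k}z^{-k-1}$ shows the residue at $\zeta=0$ is a polynomial in $1/z$, hence rational with a pole possibly only at $z=0$; and applying the Leibniz rule to $\zeta\mapsto\frac{P(\zeta)}{\zeta^{m}(z-\zeta)}$ — which is analytic near any pole $p\in\bbD\setminus\{0\}$ of $f$ because $|z|>1>|p|$ — shows the residue at such a $p$ is a $z$-independent linear combination of $(z-p)^{-1},\dots,(z-p)^{-\ell}$, with $\ell$ the order of the pole of $f$ at $p$. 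Summing, $G_n$ — and likewise $D_n$ (the extra factor $z$ is harmless) and $J_n$ — agrees on $\{|z|>1\}$ with a rational function of $z$ whose poles lie in $\{0\}\cup(\bbD\cap\{\text{poles of }f\})$, and this rational function is the sought continuation.

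Everything past the first paragraph is a standard contour-deformation and residue computation; the step that deserves care is setting up the three $\zeta$-integral representations — in particular using $\overline{\varphi_{n-1}^{*}(\zeta)}=\zeta^{1-n}\varphi_{n-1}(\zeta)$ to eliminate the modulus in $G_n$, and correctly tracking the power of $\zeta$ that winds up in each denominator.
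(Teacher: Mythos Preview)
Your proof is correct and follows essentially the same route as the paper's: rewrite $G_n$, $D_n$, $J_n$ as contour integrals over $|\zeta|=1$ with integrands meromorphic in $\zeta$ (using $\overline{\varphi_{n-1}^{*}(\zeta)}=\zeta^{1-n}\varphi_{n-1}(\zeta)$ to handle the modulus in $G_n$), then for (i) deform the contour inward, and for (ii) read off the $z$-dependence. The only cosmetic difference is in the execution of (ii): the paper expands the rational integrand $R(\zeta)/(z-\zeta)$ by partial fractions in $\zeta$ and explicitly evaluates the resulting elementary integrals, whereas you apply the residue theorem directly and analyze the $z$-dependence of each residue via the geometric series at $\zeta=0$ and the Leibniz rule at each interior pole of $f$ --- but these two computations are really the same thing viewed from opposite ends.
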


\begin{proof}
i) We present the proof for $G_n$; the proof in the other two cases is even easier.  We will make use of the fact that
\[
|\varphi_n^*(\eitheta)|^2=\varphi_n^*(\eitheta)\varphi_n(\eitheta)e^{-in\theta}.
\]
Therefore, we can write
\begin{align}\label{gint}
G_n(z)=i\int_0^{2\pi}\frac{\varphi_{n-1}^*(\eitheta)\varphi_{n-1}(\eitheta)f(\eitheta)}{(z-\eitheta)e^{in\theta}}\frac{d\theta}{2\pi}=\int_{|\zeta|=1}\frac{\varphi_{n-1}^*(\zeta)\varphi_{n-1}(\zeta)f(\zeta)}{(z-\zeta)\zeta^{n+1}}\frac{d\zeta}{2\pi}.
\end{align}
If $|z|>1$, then our hypotheses allow us to move the contour of integration to the circle $\{\zeta:|\zeta|=r\}$ for some $r<1$ and hence we obtain an analytic continuation of $G_n(z)$ to the exterior of this circle.

ii)  Given the form of $G_n$, $D_n$, and $J_n$, it suffices to show that if $R(t)$ is a rational function of $t$ without poles on the unit circle, then when $|z|>1$
\[
\int_0^{2\pi}\frac{R(\eitheta)}{z-\eitheta}\,\frac{d\theta}{2\pi}
\]
is a rational function of $z$ with no poles except possibly at $0$ and the poles of $R$ inside $\bbD$.  By using the partial fraction decomposition of $R$, this follows from the fact that if $P$ is a polynomial and $m$ is a non-negative integer, then
\begin{align*}
\int_0^{2\pi}\frac{P(\eitheta)}{z-\eitheta}\,\frac{d\theta}{2\pi}&=\frac{P(0)}{z},\qquad\qquad |z|>1,\\
\int_0^{2\pi}\frac{1}{(z-\eitheta)(\eitheta-x)^m}\,\frac{d\theta}{2\pi}&=\begin{cases}
\frac{1}{z(-x)^m},\qquad\qquad &|z|>1,\,|x|>1,\\
\frac{1}{z(z-x)^m},\qquad &|z|>1,\,|x|<1.
\end{cases}
\end{align*}
\end{proof}

By combining the previous two results, we conclude that every paraorthogonal polynomial is a solution on all of $\bbC$ to a differential equation of the form (\ref{keyode}) that has rational coefficients.  This will be especially relevant for our applications.

In the next section, we will discuss an application of polynomial solutions to linear second order differential equations.  In Section \ref{prf} we will prove Theorems \ref{mainode} and \ref{sysodes}.  The key idea will be to use the Szeg\H{o} recursion in several places to simplify various formulas.  Finally, in Section \ref{exa} we present some detailed examples that highlight the applications discussed in the next section.

\vspace{2mm}

\noindent\textbf{Acknowledgments.}  We would like to thank Mihai Stoiciu for much useful conversation and also Andrei Martinez-Finkelshtein for useful feedback on this work and for bringing the paper \cite{BMR} to our attention.  We would also like to thank the anonymous referee for directing us to the observation that Theorem \ref{mainode} can be applied even if $|\beta|\neq1$.

\section{Applications}\label{apps}

In this section, we will highlight some important consequences and applications of the main results of the previous section.

\subsection{Electrostatics}\label{elec}

Orthogonal polynomials have proven to be useful tools when studying the equilibrium positions of electrons that repel each other via the two-dimensional Coulomb interaction.   It was Stieltjes work that lead to the discovery that if $n\geq3$ particles are confined to the interval $[-1,1]$ interacting by means of a logarithmic (i.e. Coulomb) potential, then the unique energy minimizing configuration consists of one particle located at $1$, one located at $-1$, and the other $n-2$ particles located at the (distinct) zeros of a particular Jacobi polynomial (see \cite{Stieltjes,Stieltjes1,Stieltjes2} and see \cite{Szeg} for a proof).  The key step in the proof of this fact is rewriting the equilibrium condition on a collection of points as a second order linear differential equation that is solved by the polynomial with zeros at precisely those points and then recognizing that a Jacobi polynomial with the appropriate choice of parameters is a solution to this differential equation (see \cite{Szeg} for details).  Further developments in the electrostatic interpretation of zeros of orthogonal polynomials on the real line can be found in \cite{DL,DVA,Grun1,Ism1,Ism2,MMFMG}.

Our main application of the results in Section \ref{intro} will be to demonstrate that the zeros of certain families of POPUC are the locations of points that are in electrostatic equilibrium.  We will use the convention (as in \cite{Grin}) that if a particle of charge $q$ is located at a point $a\in\bbC$ and a particle of charge $p$ is located at a point $b\in\bbC$, then the force on the particle at $b$ due to the particle at $a$ is $2pq/(\bar{b}-\bar{a})$.

We will consider the problem of creating an electric field that will keep identical charges at fixed points on the unit circle stationary.  More precisely, suppose we are given a collection $\{x_1,\ldots,x_n\}\subseteq\partial\bbD$.  We will demonstrate a way to find a number $m$, a collection of points $\{a_i\}_{i=1}^m\subseteq\bbC\setminus\{x_1,\ldots,x_n\}$, and a set of real charges $\{q_i\}_{i=1}^m$ so that if a particle of charge $+1$ is placed placed at each $x_j$ ($j=1,\ldots,n$) and a particle of charge $q_i$ is placed at $a_i$ ($i=1,\ldots,m$), then the total force on the particle at each $x_j$ is zero ($j=1,\ldots,n$).  In other words, if we have a collection of identically charged particles all lying on a concentric circle, we will demonstrate a way to construct an electric field that will keep these particles stationary.  The points $\{a_i\}_{i=1}^m$ and charges $\{q_i\}_{i=1}^m$ comprise what we call a \textit{set of electric field generators} (in the language of \cite{FW}, the charged particles at $\{x_j\}_{j=1}^n$ would be called \textit{mobile charges} and the charged particles at $\{a_i\}_{i=1}^m$ would be called \textit{impurity charges}).  It is important to keep in mind that the charged particles at the points $\{x_j\}_{j=1}^n$ interact with each other as well as with the electric field generators.

With our motivation now clearly stated, we provide the following definitions:

\vspace{2mm}

\noindent\textbf{Definition.}  i) Given a set of electric field generators $\{a_i\}_{i=1}^m$ and $\{q_i\}_{i=1}^m$, we will say that a collection of points $\{x_1,\ldots,x_n\}$ located on a smooth curve $\Gamma$ is in \textit{$\Gamma$-normal electrostatic equilibrium} if for each $j=1,\ldots,n$, the force at $x_j$ is normal to $\Gamma$ at $x_j$.

ii) Given a set of electric fields generators $\{a_i\}_{i=1}^m$ and $\{q_i\}_{i=1}^m$, we will say that a collection of points $\{x_1,\ldots,x_n\}\subseteq\partial\bbD$ is in \textit{total electrostatic equilibrium} if for each $j=1,\ldots,n$,
\begin{align}\label{none}
\sum_{{k=1}\atop{k\neq j}}^n\frac{1}{x_j-x_k}+\sum_{i=1}^{m}\frac{q_i}{x_j-a_{i}}=0.
\end{align}

\vspace{2mm}

Notice that if $\Gamma$ is the unit circle, then the $\Gamma$-normal electrostatic equilibrium condition can be rewritten as
\begin{align}\label{radial}
\mbox{Im}\left[x_j\left(\sum_{{k=1}\atop{k\neq j}}^n\frac{1}{x_j-x_k}+\sum_{i=1}^{m}\frac{q_i}{x_j-a_{i}}\right)\right]=0,\qquad\qquad j=1,2,\ldots,n,
\end{align}
so it is clear that a collection of points on the unit circle that is in total electrostatic equilibrium is also in $\partial\bbD$-normal electrostatic equilibrium, but the converse is false.  Indeed, it is well-known and easy to show that $n$ particles of identical non-zero charge placed at the $n^{th}$ roots of unity and subject to no external force are in $\partial\bbD$-normal electrostatic equilibrium, but are not in total electrostatic equilibrium.

The formula (\ref{radial}) is a restatement of the fact that if we place particles of charge $+1$ at each $x_j$ ($j=1,\ldots,n$) and a particle of charge $q_i$ at each $a_i$ ($i=1,\ldots,m$), then the condition (\ref{radial}) is satisfied if and only if the force exerted on the particle at $x_j$ is normal to the unit circle at $x_j$ for each $j=1,\ldots,n$ (see \cite{Grin,MFMGO}).  Similarly, the condition (\ref{none}) is satisfied if and only if the force exerted on the particle at $x_j$ is equal to zero for each $j=1,\ldots,n$ (see \cite{MBA}).

As in the case of the interval, orthogonal polynomials are a useful tool when studying electrostatic equilibria on the circle.  In \cite{FR}, Forrester and Rogers use Jacobi polynomials to find a collection of $2n$ points on the unit circle that is in $\partial\bbD$-normal electrostatic equilibrium when the electric field generators consist of a particle with charge $p$ at $1$ and a particle with charge $q$ at $-1$.  Their result assumes the added symmetry condition that each arc of the unit circle connecting $1$ to $-1$ contains an equal number of points.

Our main application is the following result:

\begin{theorem}\label{bigelec}
Given any collection of $n\geq2$ distinct points $\{x_1,\ldots,x_n\}\subseteq\partial\bbD$, there exists a set of electric field generators so that the collection $\{x_1,\ldots,x_n\}$ is in total electrostatic equilibrium.
\end{theorem}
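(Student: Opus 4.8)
The plan is to realize $P(z):=\prod_{j=1}^{n}(z-x_j)$ as the $n$-th paraorthogonal polynomial of a measure meeting the hypotheses of Theorem~\ref{mainode}, and then to read the total equilibrium condition (\ref{none}) directly off the coefficients of the differential equation (\ref{keyode}) by a Stieltjes-type residue computation.

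First I would note that $P$ is monic of degree $n$ with $n$ distinct zeros on $\partial\bbD$, hence it is a paraorthogonal polynomial: there exist $\beta\in\partial\bbD$ and a probability measure $\mu$ with infinite support such that $P(z)=\Phi_n(z;\beta;\mu)$ (this is well known; one checks that $P$ is self-inversive, $P^{*}=-\beta P$, and reconstructs a valid Verblunsky sequence---see, e.g., \cite{OPUC1,Wong}). The measure obtained this way need not have a differentiable weight, so I would then apply Theorem~\ref{BZ} to replace $\mu$ by the Bernstein--Szeg\H{o} measure
\[
d\tilde\mu(\theta)=\frac{e^{i(n-1)\theta}}{\varphi_{n-1}(\eitheta)\varphi_{n-1}^{*}(\eitheta)}\,\frac{d\theta}{2\pi}=\frac{1}{|\varphi_{n-1}(\eitheta;\mu)|^{2}}\,\frac{d\theta}{2\pi},
\]
which still satisfies $\Phi_n(z;\beta;\tilde\mu)=P(z)$. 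Since the zeros of $\varphi_{n-1}$ lie in $\bbD$, the weight of $\tilde\mu$ is a strictly positive rational function of $\eitheta$ with no poles on $\partial\bbD$, so $\tilde\mu$ satisfies the hypotheses of Theorem~\ref{mainode} and of Proposition~\ref{intod}(ii). Consequently $y(z):=P(z)$ solves a differential equation $y''+A(z)y'+B(z)y=0$ with $A,B$ rational on all of $\bbC$, whose poles are confined to $\{0\}$, the zeros of $\varphi_{n-1}$ inside $\bbD$, and (for $A$) the zeros of $h_n(z;\beta;\beta)$.

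The crucial point---and the main obstacle---is that $A$ has no pole at any $x_j$. To see this, choose $\tau\in\partial\bbD$ with $\tau\neq\beta$ and apply Theorem~\ref{sysodes} (also valid on all of $\bbC$ by Proposition~\ref{intod}(ii)) with $u(z)=P(z)$ and $v(z)=\Phi_n(z;\tau;\tilde\mu)$. Evaluating (\ref{sys1thm}) at $z=x_j$ and using $u(x_j)=P(x_j)=0$ gives $P'(x_j)=v(x_j)\,h_n(x_j;\beta;\beta)/\big(x_j(\bar\beta-\bar\tau)\big)$. The zeros of a paraorthogonal polynomial are simple, so $P'(x_j)\neq0$; the zeros of $\Phi_n(\cdot;\beta)$ and $\Phi_n(\cdot;\tau)$ strictly interlace, hence are disjoint, so $v(x_j)\neq0$; therefore $h_n(x_j;\beta;\beta)\neq0$. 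Since moreover $x_j\in\partial\bbD$ is neither $0$ nor a zero of $\varphi_{n-1}$, it follows that $A$ and $B$ are regular at each $x_j$.

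Finally I would run the Stieltjes argument. Writing $y(z)=(z-x_j)\prod_{k\neq j}(z-x_k)$ and substituting into $y''+Ay'+By=0$ at $z=x_j$ (where $y(x_j)=0$ and $A,B$ are regular) yields $\sum_{k\neq j}\frac{1}{x_j-x_k}=-\tfrac12 A(x_j)$. From (\ref{keyode}) we have $-\tfrac12 A(z)=\frac{n-1}{2z}+\tfrac12\,h_n'(z;\beta;\beta)/h_n(z;\beta;\beta)$; the last summand is the logarithmic derivative of a rational function, so it has only simple poles, with real residues (the integer multiplicities of the zeros and poles of $h_n(\cdot;\beta;\beta)$), and it vanishes at infinity. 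Hence $-\tfrac12 A(z)=\sum_{i=1}^{m}\frac{\rho_i}{z-a_i}$ for finitely many points $a_i\in\bbC$ and real numbers $\rho_i$, with $\{a_i\}\cap\{x_1,\dots,x_n\}=\emptyset$ by the previous paragraph. Setting $q_i:=-\rho_i$ (still real), the identity above becomes exactly (\ref{none}), so $\{a_i\}_{i=1}^{m}$ and $\{q_i\}_{i=1}^{m}$ constitute the desired set of electric field generators (if $m=0$, one may append a single generator of charge $0$). The non-vanishing $h_n(x_j;\beta;\beta)\neq0$ is the heart of the matter: it simultaneously makes the residue computation clean and guarantees that no generator is placed at one of the points $x_j$.
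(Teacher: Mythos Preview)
Your argument is correct and follows the same overall architecture as the paper's proof: realize the given points as zeros of a paraorthogonal polynomial, pass to a Bernstein--Szeg\H{o} measure via Theorem~\ref{BZ}, invoke Proposition~\ref{intod}(ii) so that the coefficients of (\ref{keyode}) are rational, and then run the Stieltjes residue computation once one knows the coefficients are regular at each $x_j$.

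The one genuine difference is how you establish the key regularity step. The paper argues by contradiction: assuming $\Phi_n(\cdot;\beta)$ and the denominator $Q_2$ of the $y$--coefficient share a zero $z_0\in\partial\bbD$, it rewrites the $y$--coefficient as $R_1 h_n'/h_n+R_2$ with $R_1,R_2$ regular on $\partial\bbD$, observes that $h_n'/h_n$ then has a simple pole at $z_0$, and uses the Gauss--Lucas theorem (so $\Phi_n'(z_0;\beta)\neq0$) to force a nonremovable pole on the right-hand side of (\ref{hode2}), contradicting the ODE. You instead prove directly that $h_n(x_j;\beta;\beta)\neq0$ by evaluating the first--order system (\ref{sys1thm}) at $x_j$ and invoking simplicity of the zeros of $\Phi_n(\cdot;\beta)$ together with the strict interlacing of the zeros of $\Phi_n(\cdot;\beta)$ and $\Phi_n(\cdot;\tau)$ for $\tau\neq\beta$ on $\partial\bbD$. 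This is a clean alternative: it exploits Theorem~\ref{sysodes} (which the paper proves but does not use in its own proof of Theorem~\ref{bigelec}) and avoids the contradiction machinery entirely. Your route also makes transparent why both $A$ and $B$ are regular at each $x_j$ in one stroke, since all potential singularities of (\ref{keyode}) on $\partial\bbD$ come from zeros of $h_n(\cdot;\beta;\beta)$. The paper's route, on the other hand, stays closer to the Lam\'{e} framework of Section~\ref{lame} and does not need the interlacing property.
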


In fact, given any $n$ distinct points $\{x_1,\ldots,x_n\}\subseteq\partial\bbD$, we can write down an explicit algorithm for finding the electric field generators.  We proceed as follows:

\begin{itemize}
\item \underline{Step 1}:  Define the measure $\mu_n$ on $\partial\bbD$ by
\[
\mu_n=\frac{1}{n}\sum_{j=1}^n\delta_{x_j},
\]
and define $\beta:=(-1)^{n+1}\prod_{j=1}^n\bar{x}_j$.
\item \underline{Step 2}: Perform Gram-Schmidt orthogonalization on the linearly independent set $\{1,z,\ldots,z^{n-1}\}$ in $L^2(\partial\bbD,\mu_n)$ to arrive at the sequence of orthonormal polynomials $\{1,\varphi_1(z;\mu_n),\ldots,\varphi_{n-1}(z;\mu_n)\}$.
\item \underline{Step 3}:  Define the probability measure
\[
d\nu_n:=\frac{1}{|\varphi_{n-1}(\eitheta;\mu_n)|^2}\frac{d\theta}{2\pi}.
\]
\item \underline{Step 4}:  Calculate the quantity $h_n(z;\beta;\beta)$ for the measure $\nu_n$ in the domain $\{z:|z|>1\}$.  By Proposition \ref{intod}, it will be a rational function $S_1(z)/S_2(z)$ for some polynomials $S_1$ and $S_2$.
\item \underline{Step 5}:  Place a particle of charge $-1/2$ at each zero of $S_1$, a particle of charge $+1/2$ at each zero of $S_2$, and a particle of charge $\frac{1}{2}(1-n)$ at zero.
\end{itemize}

The validity of the above algorithm will follow from the proof of Theorem \ref{bigelec}.  In order to prove Theorem \ref{bigelec}, we need to translate the equilibrium problem into a differential equation so that we may apply the results of Section \ref{intro}.  The appropriate differential equation in this setting is called a Lam\'{e} equation, which we now discuss.

\subsection{The Lam\'{e} Equation}\label{lame}

A generalized Lam\'{e} differential equation is a differential equation of the form
\begin{align}\label{lameode}
y''(z)+\left(\sum_{i=1}^m\frac{t_i}{z-w_i}\right)y'(z)+\frac{S(z)}{\prod_{j=1}^m(z-w_j)}y(z)=0,
\end{align}
where $S(z)$ is a polynomial of degree at most $m-2$.  Suppose we can find a polynomial $P(z)$ of degree $N$ with distinct zeros that solves (\ref{lameode}) and also satisfies $P(w_i)\neq0$ for $i=1,\ldots,m$.  Let $\{p_j\}_{j=1}^N$ be the zeros of $P$.  It is easy to verify that
\[
\frac{P''(p_j)}{P'(p_j)}=\sum_{{k=1}\atop{k\neq j}}^N\frac{2}{p_j-p_k},
\]
so setting $y=P$ and $z=p_j$ in (\ref{lameode}) shows
\[
\sum_{{k=1}\atop{k\neq j}}^N\frac{1}{p_j-p_k}+\sum_{i=1}^m\frac{t_i/2}{p_j-w_i}=0\qquad\qquad j=1,\ldots,N.
\]
We recognize this as the condition for total electrostatic equilibrium with external field generated by a collection of charged particles located at $\{w_i\}_{i=1}^m$, where the particle at $w_i$ carries charge $t_i/2$.  We will see that the proof of Theorem \ref{bigelec} follows from applying the above reasoning to the differential equation in Theorem \ref{mainode} under the appropriate hypotheses.  To this end, Proposition \ref{intod} will be essential.

There is an extensive literature on the topic of generalized Lam\'{e} differential equations and their relevance to electrostatics.  We refer the reader to the references \cite{DVA,MMFMG,Marden,MFMGO,MBA,SV,Shap} for further information.  We also refer the reader to \cite{DL,EKLW,GKM1,GKM2} for further results concerning applications of polynomial solutions to second order differential equations.

\section{Proofs and Calculations}\label{prf}

\begin{proof}[Proof of Theorems \ref{mainode} and \ref{sysodes}]
Our proof begins very much in the same spirit as the proof of \cite[Theorem 2.1]{IW}.  We begin by writing
\begin{align*}
\Phi_n'(z;\beta)&=\sum_{k=0}^{n-1}\varphi_k(z)\int_0^{2\pi}\Phi_n'(\eitheta;\beta)\overline{\varphi_k(\eitheta)}w(\theta)\frac{d\theta}{2\pi}\\
&=\sum_{k=0}^{n-1}\varphi_k(z)\int_0^{2\pi}i\eitheta\Phi_n'(\eitheta;\beta)\overline{\eitheta\varphi_k(\eitheta)}w(\theta)\frac{d\theta}{2\pi i}\\
&=\sum_{k=0}^{n-1}\varphi_k(z)\int_0^{2\pi}\frac{d}{d\theta}[\Phi_n(\eitheta;\beta)]\overline{\eitheta\varphi_k(\eitheta)}w(\theta)\frac{d\theta}{2\pi i}.
\end{align*}
We then integrate by parts to rewrite this as
\begin{align*}
\Phi_n'(z;\beta)&=-\sum_{k=0}^{n-1}\varphi_k(z)\int_0^{2\pi}\Phi_n(\eitheta;\beta)\frac{d}{d\theta}[\overline{\eitheta\varphi_k(\eitheta)}w(\theta)]\frac{d\theta}{2\pi i}\\
&=i\sum_{k=0}^{n-1}\varphi_k(z)\int_0^{2\pi}\Phi_n(\eitheta;\beta)\left[\overline{\eitheta\varphi_k(\eitheta)}w'(\theta)-iw(\theta)\overline{e^{2i\theta}\varphi_k'(\eitheta)+\eitheta\varphi_k(\eitheta)}\right]\frac{d\theta}{2\pi}.
\end{align*}
where we used the continuity properties of $w$.  For each $m\in\bbN$, let us define $K_m(z,t):=\sum_{j=0}^m\varphi_j(z)\overline{\varphi_j(t)}$ to be the reproducing kernel for the measure $\mu$ and polynomials of degree at most $m$.  By \cite[Section 2.2]{Wong}, we can rewrite the above expression as
\begin{align}
\nonumber\Phi_n'(z;\beta)&=i\int_{0}^{2\pi}\Phi_n(\eitheta;\beta)K_{n-1}(z,\eitheta)e^{-i\theta}w'(\theta)\frac{d\theta}{2\pi}\\
\nonumber&\qquad\qquad\qquad\qquad+\varphi_{n-1}(z)\int_{0}^{2\pi}\left(\overline{\eitheta\varphi_{n-1}(\eitheta)}+\overline{e^{2i\theta}\varphi_{n-1}'(\eitheta)}\right)\Phi_n(\eitheta;\beta)w(\theta)\frac{d\theta}{2\pi}\\
\label{s1}&=i\int_{0}^{2\pi}\Phi_n(\eitheta;\beta)K_{n-1}(z,\eitheta)e^{-i\theta}w'(\theta)\frac{d\theta}{2\pi}+n\Phi_{n-1}(z)(1-\bar{\beta}\alpha_{n-1}),
\end{align}
It follows from \cite[Section 2.3]{Wong} that (with $\zeta=\eitheta$)
\[
K_{n-1}(z,\zeta)=\frac{\kappa_{n-1}\overline{\varphi_{n-1}(\zeta)}\Phi_n(z;\beta_{\zeta})}{z-\zeta},
\qquad\mbox{ where }\qquad
\bar{\beta_{\zeta}}=\frac{\zeta\Phi_{n-1}(\zeta)}{\Phi_{n-1}^*(\zeta)}.
\]
If we plug this into (\ref{s1}) and simplify (using also the fact that $\overline{\varphi_{n-1}(\zeta)}=\zeta^{1-n}\varphi_{n-1}^*(\zeta)$ when $|\zeta|=1$), we get
\begin{align}
\nonumber\Phi_n'(z;\beta)&=\Phi_{n-1}(z)\left(n(1-\bar{\beta}\alpha_{n-1})+iz\kappa_{n-1}\int_{0}^{2\pi}\frac{\Phi_n(\eitheta;\beta)w'(\theta)\varphi_{n-1}^*(\eitheta)}{(z-\eitheta)e^{in\theta}}\frac{d\theta}{2\pi}\right)\\
\nonumber&\qquad\qquad-i\kappa_{n-1}\Phi_{n-1}^*(z)\int_{0}^{2\pi}\frac{\Phi_n(\eitheta;\beta)w'(\theta)\varphi_{n-1}(\eitheta)}{(z-\eitheta)e^{i(n-1)\theta}}\frac{d\theta}{2\pi}\\
\label{s2}&=\Phi_{n-1}(z)(n(1-\bar{\beta}\alpha_{n-1})+zG_n(z)+\bar{\beta}D_n(z))-\Phi_{n-1}^*(z)(J_n(z)-\bar{\beta}G_n(z)),
\end{align}
One can also derive (\ref{s2}) from (\ref{s1}) by using the Christoffel-Darboux formula (see \cite[Section 3]{SimonCD}) to replace $K_{n-1}(z;\eitheta)$.

The relation (\ref{s2}) holds for all values of $\beta\in\bbC$.  Let $\tau\in\bbC$ be distinct from $\beta$.  Equation (\ref{popucdef}) easily implies
\[
\Phi_{n-1}(z)=\frac{\bar{\beta}\Phi_n(z;\tau)-\bar{\tau}\Phi_n(z;\beta)}{z(\bar{\beta}-\bar{\tau})},\qquad\qquad \Phi_{n-1}^*(z)=\frac{\Phi_n(z;\beta)-\Phi_n(z;\tau)}{\bar{\tau}-\bar{\beta}}.
\]
If we plug this into (\ref{s2}), we get the following system of ODE's:
\begin{align}
\label{sys1}\Phi_n'(z;\beta)&=\Phi_n(z;\tau)\left(\frac{\bar{\beta}(n(1-\bar{\beta}\alpha_{n-1})+zG_n(z)+\bar{\beta}D_n(z))}{z(\bar{\beta}-\bar{\tau})}+\frac{(J_n(z)-\bar{\beta}G_n(z))}{\bar{\tau}-\bar{\beta}}\right)\\
\nonumber&\qquad\qquad-\Phi_n(z;\beta)\left(\frac{\bar{\tau}(n(1-\bar{\beta}\alpha_{n-1})+zG_n(z)+\bar{\beta}D_n(z))}{z(\bar{\beta}-\bar{\tau})}+\frac{(J_n(z)-\bar{\beta}G_n(z))}{\bar{\tau}-\bar{\beta}}\right)\\
\label{sys2}\Phi_n'(z;\tau)&=\Phi_n(z;\tau)\left(\frac{\bar{\beta}(n(1-\bar{\tau}\alpha_{n-1})+zG_n(z)+\bar{\tau}D_n(z))}{z(\bar{\beta}-\bar{\tau})}+\frac{(J_n(z)-\bar{\tau}G_n(z))}{\bar{\tau}-\bar{\beta}}\right)\\
\nonumber&\qquad\qquad-\Phi_n(z;\beta)\left(\frac{\bar{\tau}(n(1-\bar{\tau}\alpha_{n-1})+zG_n(z)+\bar{\tau}D_n(z))}{z(\bar{\beta}-\bar{\tau})}+\frac{(J_n(z)-\bar{\tau}G_n(z))}{\bar{\tau}-\bar{\beta}}\right).
\end{align}
This completes the proof of Theorem \ref{sysodes}.  We continue with the proof of Theorem \ref{mainode}.

\vspace{2mm}

Equation (\ref{sys1}) yields the following formula for $\Phi_n(z;\tau)$:
\begin{align}\label{tauform}
\frac{z(\bar{\beta}-\bar{\tau})\Phi_n'(z;\beta)+\Phi_n(z;\beta)\left(\bar{\tau}(n(1-\bar{\beta}\alpha_{n-1})+zG_n(z)+\bar{\beta}D_n(z))-z(J_n(z)-\bar{\beta}G_n(z))\right)}{\bar{\beta}(n(1-\bar{\beta}\alpha_{n-1})+zG_n(z)+\bar{\beta}D_n(z))-z(J_n(z)-\bar{\beta}G_n(z))}.
\end{align}
We can use (\ref{tauform}) to write
\begin{align}
\nonumber\Phi_n'(z;\tau)&=\bigg(\Phi_n''(z;\beta)z(\bar{\beta}-\bar{\tau})h_n(z;\beta;\beta)+\\
\label{tauder}&\qquad\qquad\Phi_n'(z;\beta)\left((\bar{\beta}-\bar{\tau})W[h_n(z;\beta;\beta),z]+h_n(z;\beta;\beta)h_n(z;\tau;\beta)\right)\\
\nonumber&\qquad\qquad\qquad\qquad\qquad+\Phi_n(z;\beta)W[h_n(z;\beta;\beta),h_n(z;\tau;\beta)]\bigg)h_n(z;\beta;\beta)^{-2}
\end{align}
If we substitute (\ref{tauder}) into (\ref{sys2}), then the resulting differential equation is
\begin{align}
\nonumber&\frac{z(\bar{\beta}-\bar{\tau})}{h_n(z;\beta;\beta)^2}\bigg(\Phi_n''(z;\beta)z(\bar{\beta}-\bar{\tau})h_n(z;\beta;\beta)+\Phi_n'(z;\beta)\big((\bar{\beta}-\bar{\tau})W[h_n(z;\beta;\beta),z]\\
\label{longode}&\qquad +h_n(z;\beta;\beta)h_n(z;\tau;\beta)\big)+\Phi_n(z;\beta)W[h_n(z;\beta;\beta),h_n(z;\tau;\beta)]\bigg)\\
\nonumber&\qquad=\frac{h_n(z;\beta;\tau)(z(\bar{\beta}-\bar{\tau})\Phi_n'(z;\beta)+\Phi_n(z;\beta)h_n(z;\tau;\beta))}{h_n(z;\beta;\beta)}-\Phi_n(z;\beta)h_n(z;\tau;\tau).
\end{align}
This ODE holds for all $\tau\in\partial\bbD$ (even at $\beta$ if $|\beta|=1$), so we can integrate both sides of this differential equation around $\partial\bbD$ with respect to $d\tau/(2\pi i)$.  To do so, we will use the fact that $\bar{\tau}=1/\tau$ and apply the Cauchy Integral Formula.
Performing this lengthy calculation gives the ODE:
\begin{align*}
0=&-2\bar{\beta}z^2\Phi_n''(z;\beta)+z\Phi_n'(z;\beta)\bigg[\frac{-2\bar{\beta}W[h_n(z;\beta;\beta),z]}{h_n(z;\beta;\beta)}-h_n(z;-\beta;\beta)\\
&+\bar{\beta}\left(n(1+\bar{\beta}\alpha_{n-1})-\bar{\beta}D_n(z)-zJ_n(z)/\bar{\beta}\right)\bigg]\\
&+\Phi_n(z;\beta)\bigg[\frac{-zW[h_n(z;\beta;\beta),h_n(z;-\beta;\beta)]}{h_n(z;\beta;\beta)}+h_n(z;\beta;\beta)\left[n+zG_n(z)\right]\\
&\quad-\bar{\beta}\bigg(\left(n(1-\bar{\beta}\alpha_{n-1})+zG_n(z)+\bar{\beta}D_n(z)\right)\left(n-zJ_n(z)/\bar{\beta}\right)\\
&\qquad\qquad+z(J_n(z)-\bar{\beta}G_n(z))\left(n\alpha_{n-1}-D_n(z)\right)\bigg)\bigg].
\end{align*}
After some simplification, this can be rewritten as (\ref{keyode}).
\end{proof}

In the proof of Theorem \ref{mainode}, the calculations up to equation (\ref{s2}) are very similar to those in the proof of \cite[Theorem 1.2]{IW}.  However, our approach involving the system of first order equations given by (\ref{sys1}) and (\ref{sys2}) differs from the approach used in \cite{IW}, which involves raising and lowering operators.  In the next section we will see how these differing approaches result in different differential equations (even in the case $\beta=\alpha_{n-1}$), exemplified by the fact that the differential equation we derive depends on whether one chooses $|z|<1$ or $|z|>1$ when evaluating the required integrals.

\begin{proof}[Proof of Theorem \ref{bigelec}]
First recall from \cite[Theorem 2.2.13]{OPUC1} that any collection of $n$ distinct points on the unit circle is the zero set of a paraorthogonal polynomial on the unit circle.  By Theorem \ref{BZ}, we may assume that the measure of orthogonality satisfies the hypotheses of Proposition \ref{intod}ii.

If $\mu$ is a measure that satisfies the hypotheses of Proposition \ref{intod}ii, then all of the coefficients in the differential equation in Theorem \ref{mainode} are rational functions.  Therefore, the ODE (\ref{keyode}) can be written
\begin{align}\label{hode}
0=\Phi_n''(z;\beta)+\left[\frac{1-n}{z}-\frac{h_n'(z;\beta;\beta)}{h_n(z;\beta;\beta)}\right]\Phi_n'(z;\beta)+\frac{Q_1(z)}{Q_2(z)}\Phi_n(z;\beta),
\end{align}
for some polynomials $Q_1(z)$ and $Q_2(z)$.  Since $h_n(z;\beta;\beta)$ is also a rational function, we can write it as $S_1(z)/S_2(z)$ for some polynomials $S_1$ and $S_2$.  With this notation, we can rewrite (\ref{hode}) as
\[
0=\Phi_n''(z;\beta)+\left[\frac{1-n}{z}-\frac{S_1'(z)}{S_1(z)}+\frac{S_2'(z)}{S_2(z)}\right]\Phi_n'(z;\beta)+\frac{Q_1(z)}{Q_2(z)}\Phi_n(z;\beta).
\]
If $\{k_{j,m}\}_{m=1}^{\deg(S_j)}$ are the zeros of $S_j$ ($j=1,2$), then we have
\begin{align}\label{splitform}
0=\Phi_n''(z;\beta)+\left[\frac{1-n}{z}-\sum_{m=1}^{\deg(S_1)}\frac{1}{z-k_{1,m}}+\sum_{m=1}^{\deg(S_2)}\frac{1}{z-k_{2,m}}\right]\Phi_n'(z;\beta)+\frac{Q_1(z)}{Q_2(z)}\Phi_n(z;\beta).
\end{align}
Once we show that $\Phi_n(z;\beta)$ and $Q_2(z)$ do not share any common zeros, then our discussion in Section \ref{lame} implies the desired result.

Notice that since $\mu$ satisfies the hypotheses of Proposition \ref{intod}ii, we can rewrite (\ref{hode}) as
\begin{align}\label{hode2}
0=\Phi_n''(z;\beta)+\left[\frac{1-n}{z}-\frac{h_n'(z;\beta;\beta)}{h_n(z;\beta;\beta)}\right]\Phi_n'(z;\beta)+\left[\frac{R_1(z)h_n'(z;\beta;\beta)}{h_n(z;\beta;\beta)}+R_2(z)\right]\Phi_n(z;\beta),
\end{align}
where $R_1$ and $R_2$ are rational functions without poles on the unit circle.  Now, suppose for contradiction that $Q_2$ has a zero $z_0\in\partial\bbD$ that also satisfies $\Phi_n(z_0;\beta)=0$.  It follows that $Q_1/Q_2$ has a pole at $z_0$, which implies $h_n'(z;\beta;\beta)/h_n(z;\beta;\beta)$ has a pole at $z_0$ (since $R_1$ and $R_2$ have no poles on $\partial\bbD$).  Notice that $h_n'(z;\beta;\beta)/h_n(z;\beta;\beta)$ must have a simple pole at $z_0$ and hence
\[
\left[\frac{R_1(z)h_n'(z;\beta;\beta)}{h_n(z;\beta;\beta)}+R_2(z)\right]\Phi_n(z;\beta)
\]
has a removable singularity at $z_0$.  However,
\[
\left[\frac{1-n}{z}-\frac{h_n'(z;\beta;\beta)}{h_n(z;\beta;\beta)}\right]\Phi_n'(z;\beta)
\]
has a simple pole at $z_0$ (by the Gauss-Lucas Theorem), and hence the right-hand side of (\ref{hode}) is a rational function with a pole at $z_0$, which means it is not the zero function.  This is our desired contradiction.
\end{proof}

We conclude this section by providing an explicit justification for the algorithm for finding the set of electric field generators outlined at the end of Section \ref{apps}.  From \cite[Theorem 2.2.13]{OPUC1}, we know that with $\mu_n$ and $\beta$ defined as in Step $1$ of the algorithm, the polynomial $z\Phi_{n-1}(z;\mu_n)-\bar{\beta}\Phi_{n-1}^*(z;\mu_n)$ vanishes precisely at $\{x_1,\ldots,x_n\}$.  The Bernstein-Szeg\H{o} Theorem implies $\Phi_{n-1}(z;\mu_n)=\Phi_{n-1}(z;\nu_n)$ ($\nu_n$ is defined in Step $3$ of the algorithm).  Therefore
\[
z\Phi_{n-1}(z;\mu_n)-\bar{\beta}\Phi_{n-1}^*(z;\mu_n)=\Phi_n(z;\beta;\nu_n).
\]
Since $\nu_n$ satisfies the hypotheses of Proposition \ref{intod}ii, we can make the conclusion as in Step $4$ of the algorithm, and (\ref{splitform}) shows that $\Phi_n(z;\beta;\nu_n)$ solves an ODE of the proper form to justify the placement of charges as in Step $5$.

\section{Examples}\label{exa}

In this section, we will consider applications of the results in Section \ref{apps} to specific probability measures on the unit circle.

\subsection{Example: Lebesgue Polynomials.}  Let $\mu$ be Lebesgue measure on the circle.  In this case $\alpha_{n-1}=0$ and $w$ is constant so $w'=0$.  Let us also assume $\beta=1$, so that $\Phi_n(z;\beta)=z^n-1$ and $h_n(z;\beta;\beta)=n$.  With this choice, the right-hand side of (\ref{keyode}) becomes
\begin{align*}
&n(n-1)z^{n-2}+nz^{n-1}\left(\frac{1-n}{z}\right)=0,
\end{align*}
exactly as predicted.  We see that particles of charge $+1$ located at the $n^{th}$ roots of unity are in total electrostatic equilibrium when the external field is generated by a charge of $\frac{1}{2}(1-n)$ located at the origin.

\vspace{4mm}

\subsection{Example: Degree One Bernstein-Szeg\H{o} Polynomials.}\label{example2}  In this case, let us write
\[
d\mu(\theta)=\frac{1-\left|\zeta\right|^2}{\left|1-\zeta\eitheta\right|^2}\frac{d\theta}{2\pi},\qquad\qquad|\zeta|<1.
\]
Some properties of this measure are given on \cite[page 85]{OPUC1}, but there are some typos in the information there, which we will correct.  If one sets $\zeta=r e^{i\varphi}$ and defines the Poisson kernel by
\[
P_r(x,y):=\frac{1-r^2}{1+r^2-2r\cos(x-y)}
\]
as on \cite[page 27]{OPUC1}, then it holds that $d\mu(\theta)=P_r(\theta,-\varphi)\frac{d\theta}{2\pi}$.  From this, it is easy to verify by direct computation that the Verblunsky coefficients satisfy $\alpha_0=\zeta$ and $\alpha_n=0$ for $n\geq1$.  The orthonormal polynomials $\{\varphi_n(z;\mu)\}_{n\geq0}$ for this measure satisfy
\[
\varphi_n(z;\mu)=\frac{z^n-\bar{\zeta}z^{n-1}}{\sqrt{1-|\zeta|^2}}.
\]

We can explicitly calculate the ODE satisfied by $\Phi_n(z;\beta)$, though for notational convenience, we will specialize to the case $\zeta=1/2$.  The computations are lengthy, but each step can be handled using only simple contour integration and Fourier expansions.  Indeed, if we assume $|z|>1$, then we calculate:
\begin{align*}
G_n(z)&=\frac{1}{z(2z-1)},\qquad\qquad D_n(z)=\frac{2(z^2-1)}{(2z-1)^2z^{n-1}},\qquad\qquad J_n(z)=0.
\end{align*}
With this knowledge of $G_n$, $D_n$, and $J_n$, we can write
\begin{align*}
&h_n(z;\beta;\beta)=\bar{\beta}\frac{nz^{n}(2z-1)^2+2(2z-1)z^{n}+2z\bar{\beta}(z^2-1)}{z^{n}(2z-1)^2}=:\bar{\beta}\frac{P_{1,n}(z)}{P_{2,n}(z)},
\end{align*}
where
\[
P_{1,n}(z)=z^{n}(2z-1)(n(2z-1)+2)+2z\bar{\beta}(z^2-1),\qquad P_{2,n}(z)=z^n(2z-1)^2,
\]

If we write
\[
P_{1,n}(z)=4nz\prod_{j=1}^{n+1}(z-p_{j,n}),
\]
then we can apply formula (\ref{splitform}) and write
\begin{align}\label{lame1}
0=\Phi_n''(z;\beta)+\left(\frac{2}{z-1/2}-\sum_{j=1}^{n+1}\frac{1}{z-p_{j,n}}\right)\Phi_n'(z;\beta)+\frac{Q_{1,n}(z)}{Q_{2,n}(z)}\Phi_n(z;\beta),
\end{align}
where $Q_{1,n}$ and $Q_{2,n}$ are polynomials.  Theorem \ref{bigelec} implies that if we place a particle with charge $+1$ at $1/2$ and a particle with charge $-1/2$ at each point $\{p_{j,n}\}_{j=1}^{n+1}$, then $n$ particles with charge $+1$ located at the zeros of $\Phi_n(z;\beta)$ will be in total electrostatic equilibrium.  Figure 1 is a Mathematica plot illustrating this phenomenon when $n=22$ and $\beta=-1$.

\begin{figure}[h!]\label{ex2pic}
  \centering
    \includegraphics[width=0.5\textwidth]{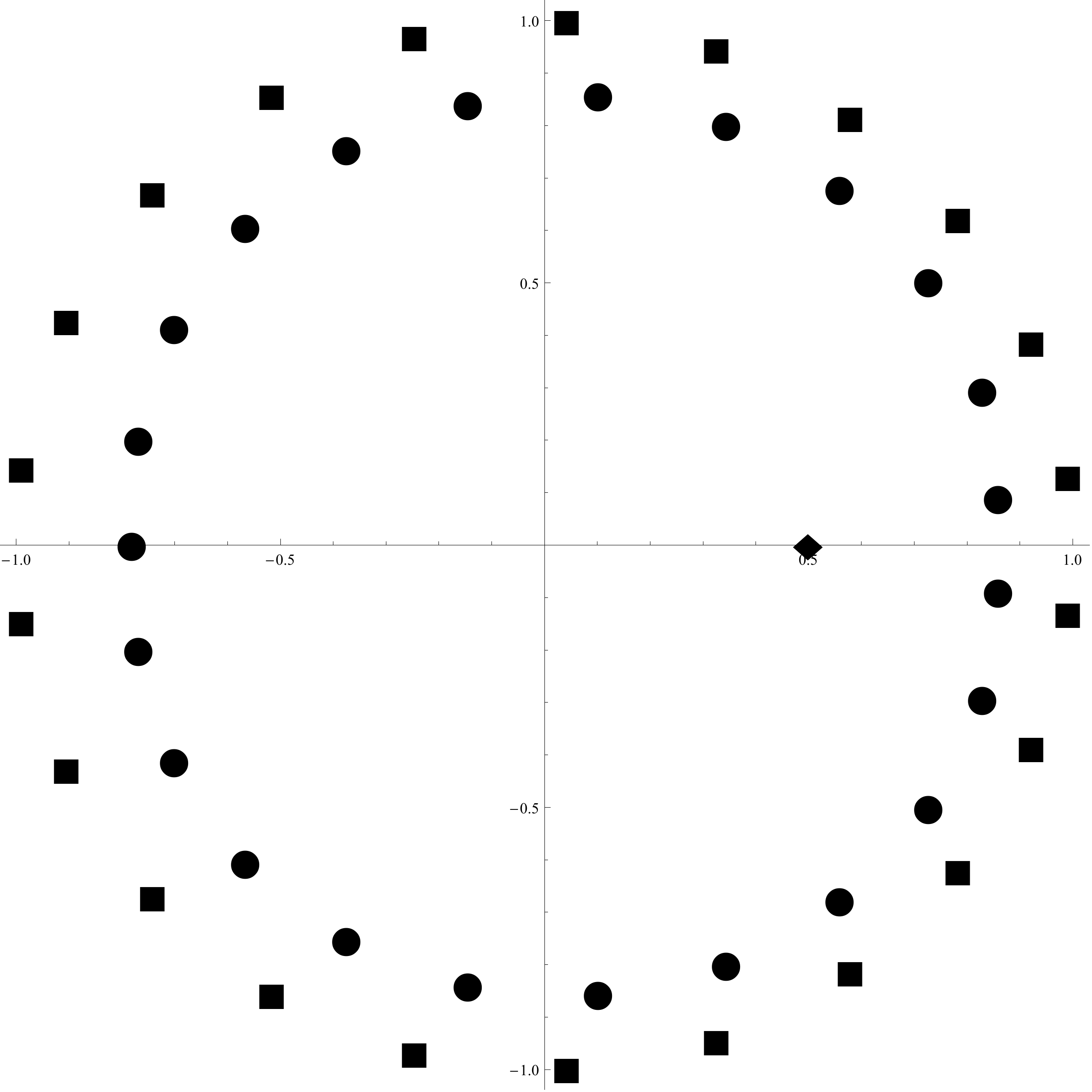}
  \caption{The squares show the location of the zeros of $\Phi_{22}(z;-1)$ and the circles show the non-zero zeros of $P_{1,22}(z)$ when $\beta=-1$.  The point $\{1/2\}$ is marked by a  diamond.}
\end{figure}

\vspace{2mm}

Notice that the functions $G_n(z)$, $D_n(z)$, and $J_n(z)$ are all holomorphic on $\bbD$ also.  If we assume $|z|<1$ and $n\geq2$, then we calculate
\begin{align*}
G_n(z)&=\frac{1}{2-z},\qquad\qquad D_n(z)=0,\qquad\qquad J_n(z)=\frac{2z^{n-2}(z^2-1)}{(z-2)^2}.
\end{align*}
With this knowledge of $G_n$, $D_n$, and $J_n$, we can write
\begin{align*}
&h_n(z;\beta;\beta)=\frac{\bar{\beta}z(z-2)(n(z-2)-2z)-2z^n(z^2-1)}{z(z-2)^2}=:\frac{P_{1,n}(z)}{P_{2,n}(z)},
\end{align*}
where
\[
P_{1,n}(z)=\bar{\beta}z(z-2)(n(z-2)-2z)-2z^n(z^2-1),\qquad P_{2,n}(z)=z(z-2)^2,
\]

If we write
\[
P_{1,n}(z)=-2z\prod_{j=1}^{n+1}(z-p_{j,n}),
\]
then we can apply formula (\ref{splitform}) and write
\begin{align}\label{lame2}
0=\Phi_n''(z;\beta)+\left(\frac{2-n}{z}+\frac{2}{z-2}-\sum_{j=1}^{n+1}\frac{1}{z-p_{j,n}}\right)\Phi_n'(z;\beta)+\frac{Q_{1,n}(z)}{Q_{2,n}(z)}\Phi_n(z;\beta),
\end{align}
where $Q_{1,n}$ and $Q_{2,n}$ are polynomials.  Reasoning as above, we conclude that particles of charge $+1$ located at the zeros of $\Phi_n(z;\beta)$ are in total electrostatic equilibrium when subject to the external field generated by a charge of $+1$ at $2$, a charge of $-1/2$ at each $p_{j,n}$, and a charge of $\frac{1}{2}(2-n)$ at the origin.  Figure 2 is a Mathematica plot illustrating this phenomenon when $n=22$ and $\beta=-1$.

\begin{figure}[h!]\label{ex2pic2}
  \centering
    \includegraphics[width=0.55\textwidth]{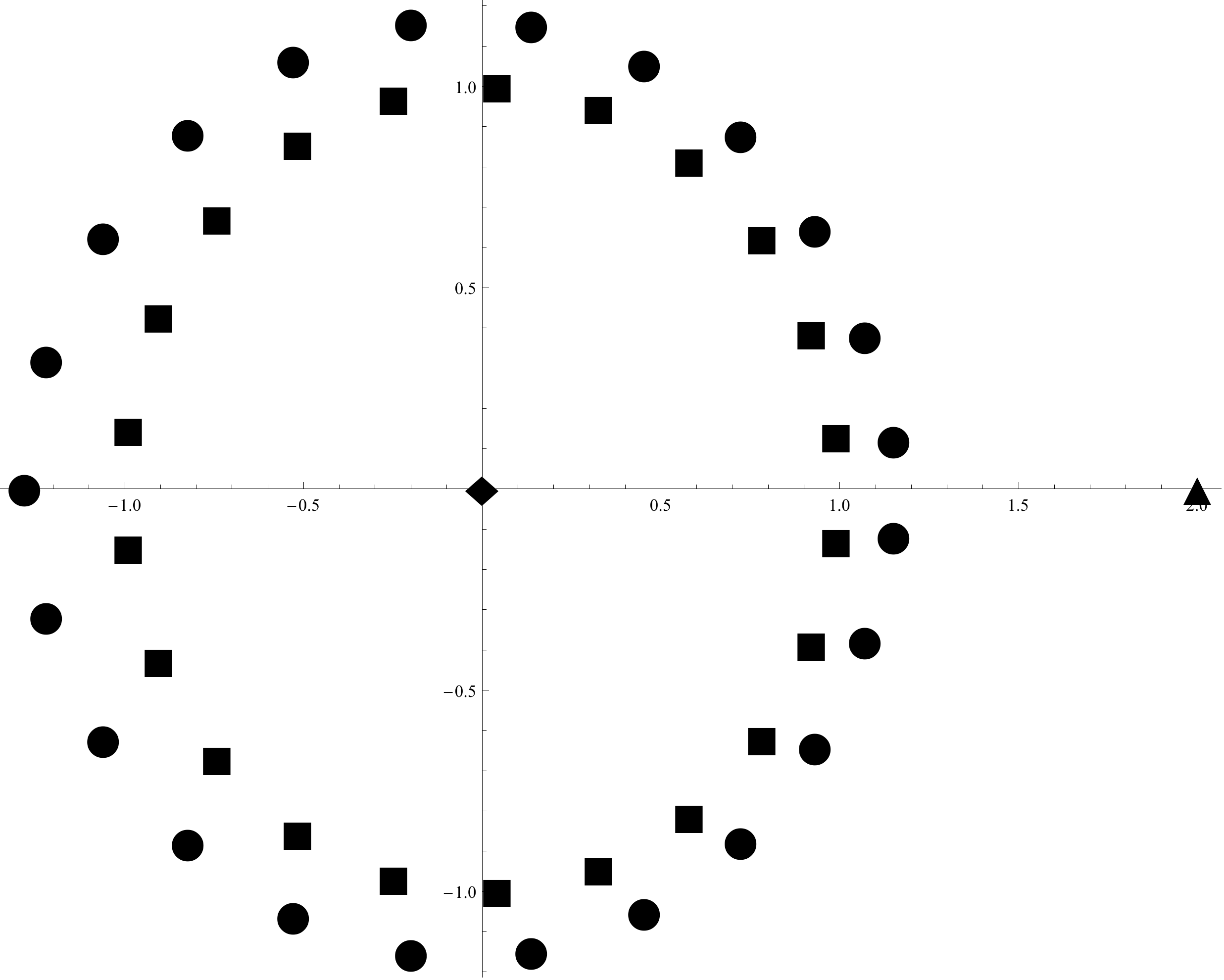}
  \caption{The squares show the location of the zeros of $\Phi_{22}(z;-1)$ and the circles show the non-zero zeros of $P_{1,22}(z)$ when $\beta=-1$.  The point $\{2\}$ is marked by a triangle and the origin is marked by a diamond.}
\end{figure}

Since a charged particle at the origin exerts a force on a charged particle on the circle that is normal to the unit circle at that point, then if we remove the charged particle at zero from this example, the zeros of $\Phi_n(z;\beta)$ are still in $\partial\bbD$-normal electrostatic equilibrium.

\vspace{4mm}

\subsection{Example: Sieved  Degree One Bernstein-Szeg\H{o} Polynomials.}  In this case, we consider measures of the form
\[
d\mu(\theta)=\frac{1-\left|\zeta\right|^2}{\left|1-\zeta e^{iM\theta}\right|^2}\frac{d\theta}{2\pi},\qquad\qquad|\zeta|<1,\quad M\in\bbN.
\]
For this measure, the Verblunsky coefficients satisfy $\alpha_{M-1}=\zeta$ and $\alpha_n=0$ for $n\neq M-1$ (see\footnote{As in the previous example, we use the table on page 85 in \cite{OPUC1} with $P_r(\theta,\varphi)$ replaced by $P_r(\theta,-\varphi)$.} \cite[page 84]{OPUC1}).  Again, for the sake of clarity, we will specialise to the case of $\zeta=1/2$ and derive the second order ODE for $\Phi_n(z;\beta)$.  The calculations are very similar to those in Example \ref{example2}, so we present fewer details here.  The only additional tricks we use are the formulas
\begin{align*}
\frac{1}{\xi(\xi^M-x)}&=\sum_{j=1}^M\frac{1}{Mx}\left(\frac{1}{\xi-x^{1/M}e^{2\pi ij/M}}-\frac{1}{\xi}\right),\\
\frac{M}{x^M-1}&=\sum_{j=1}^M\frac{1}{xe^{2\pi ij/M}-1},
\end{align*}
which are easily checked.  When $|z|>1$ and $n>M$, we have
\begin{align*}
G_n(z)=\frac{M}{z(2z^M-1)},\qquad\qquad &D_n(z)=\frac{2M(z^{2M}-1)}{(2z^M-1)^2z^{n-M}},\qquad\qquad J_n(z)=0,\\
h_n(z;\beta;\beta)&=\bar{\beta}\left(n+\frac{2M}{2z^M-1}+\frac{2M\bar{\beta}(z^{2M}-1)}{(2z^M-1)^2z^{n-M}}\right).
\end{align*}

If we perform an analysis similar to that of the previous example, then we deduce the existence of a polynomial $P_{1,n}(z)$ given by
\[
P_{1,n}(z):=Cz^M\prod_{j=1}^{n+M}(z-p_{j,n})
\]
for some $C\in\bbC$ so that $\Phi_n(z;\beta)$ satisfies
\[
\Phi_n''(z;\beta)+\left(\sum_{j=1}^M\frac{2}{z-2^{-1/M}e^{2\pi ij/M}}+\frac{1-M}{z}-\sum_{j=1}^{n+M}\frac{1}{z-p_{j,n}}\right)\Phi_n'(z;\beta)+\frac{Q_{1,n}(z)}{Q_{2,n}(z)}\Phi_n(z;\beta)=0,
\]
where $Q_{1,n}$ and $Q_{2,n}$ are polynomials.  Therefore, particles located at the zeros of $\Phi_n(z;\beta)$ - each carrying charge $+1$ - are in total electrostatic equilibrium when the external field is created by a particle of charge $+1$ at each of $\{2^{-1/M}e^{2\pi ij/M}\}_{j=1}^M$, a particle of charge $-1/2$ at each of $\{p_{j,n}\}_{j=1}^{n+M}$, and a particle of charge $\frac{1}{2}(1-M)$ at the origin.  If we remove the charge at the origin, then the zeros of $\Phi_n(z;\beta)$ are in $\partial\bbD$-normal electrostatic equilibrium.

\vspace{4mm}

\subsection{Example: Single Non-Trivial Moment.}\label{sntm}  In this case, we consider the measure given by
\[
d\mu(\theta)=(1-\cos(\theta))\frac{d\theta}{2\pi}=\frac{|1-\eitheta|^2}{2}\frac{d\theta}{2\pi}.
\]
For this measure, the Verblunsky coefficients satisfy $\alpha_{n}=-(n+2)^{-1}$ (see \cite[page 86]{OPUC1}).  The monic orthogonal polynomials are given by
\begin{align*}
\Phi_{n}(z)&=\frac{1}{n+1}\sum_{j=0}^n(j+1)z^j
\end{align*}
From this, it follows that
\[
\Phi_n(z;-1)=\frac{z^{n+1}-1}{z-1},
\]
which has zeros located at the $(n+1)^{st}$ roots of unity, except one.  This means that particles of charge $+1$ located at the zeros of $\Phi_n(z;-1)$ are in $\partial\bbD$-normal electrostatic equilibrium when the external field is generated by a single particle of charge $+1$ located at $1$ (see also \cite{FR} or \cite[Theorem 3]{BMR} with $p=1$ and $q=0$).

With the above formulas, and the fact that $\kappa_n=\sqrt{\frac{2n+2}{n+2}}$, we calculate (for $|z|>1$)
\begin{align*}
G_n(z)&=\frac{1}{n(n+1)z}\sum_{k=1}^n\frac{k^2-n-n^2}{z^k},\\
D_n(z)&=\frac{-1}{n(n+1)}\left[n^2+\sum_{k=1}^{n}\frac{n^2+2n-2kn-k^2}{z^{k}}-\frac{n^2}{z^{n+1}}\right],\\
J_n(z)&=\frac{-1}{zn(n+1)}\left[\sum_{k=0}^{n-1}\frac{(n-k)^2}{z^k}\right].
\end{align*}
With this knowledge of $G_n$, $D_n$, and $J_n$, we can write down $h_n(z;\beta;\beta)$ as a ratio of two polynomials, but the expression is extremely lengthy.  For notational convenience, we consider only the case $\beta=-1$:
\begin{align*}
&h_n(z;-1;-1)=-\frac{n(nz^{n+2}-(n+2)z^{n+1}+z+1)}{(n+1)z^{n+1}(z-1)}=:\frac{P_{1,n}(z)}{P_{2,n}(z)},
\end{align*}
where
\[
P_{1,n}(z)=-n(nz^{n+2}-(n+2)z^{n+1}+z+1),\qquad P_{2,n}(z)=(n+1)z^{n+1}(z-1).
\]

If we write
\[
P_{1,n}(z)=-n^2(z-1)\prod_{j=1}^{n+1}(z-p_{j,n}),
\]
then the differential equation of Theorem \ref{mainode} becomes
\begin{align*}
0=\Phi_n''(z;-1)+\left(\frac{2}{z}-\sum_{j=1}^{n+1}\frac{1}{z-p_{j,n}}\right)\Phi_n'(z;-1)+\frac{Q_{1,n}(z)}{Q_{2,n}(z)}\Phi_n(z;-1),
\end{align*}
where $Q_{1,n}$ and $Q_{2,n}$ are polynomials.  Reasoning as above, we conclude that particles of charge $+1$ located at the zeros of $\Phi_n(z;-1)$ are in total electrostatic equilibrium when the external field is generated by particles of charge $-1/2$ at each $p_{j,n}$ and a particle of charge $+1$ at the origin.  Figure 3 is a Mathematica plot illustrating this phenomenon when $n=14$.

\begin{figure}[h!]\label{ex4pic}
  \centering
    \includegraphics[width=0.41\textwidth]{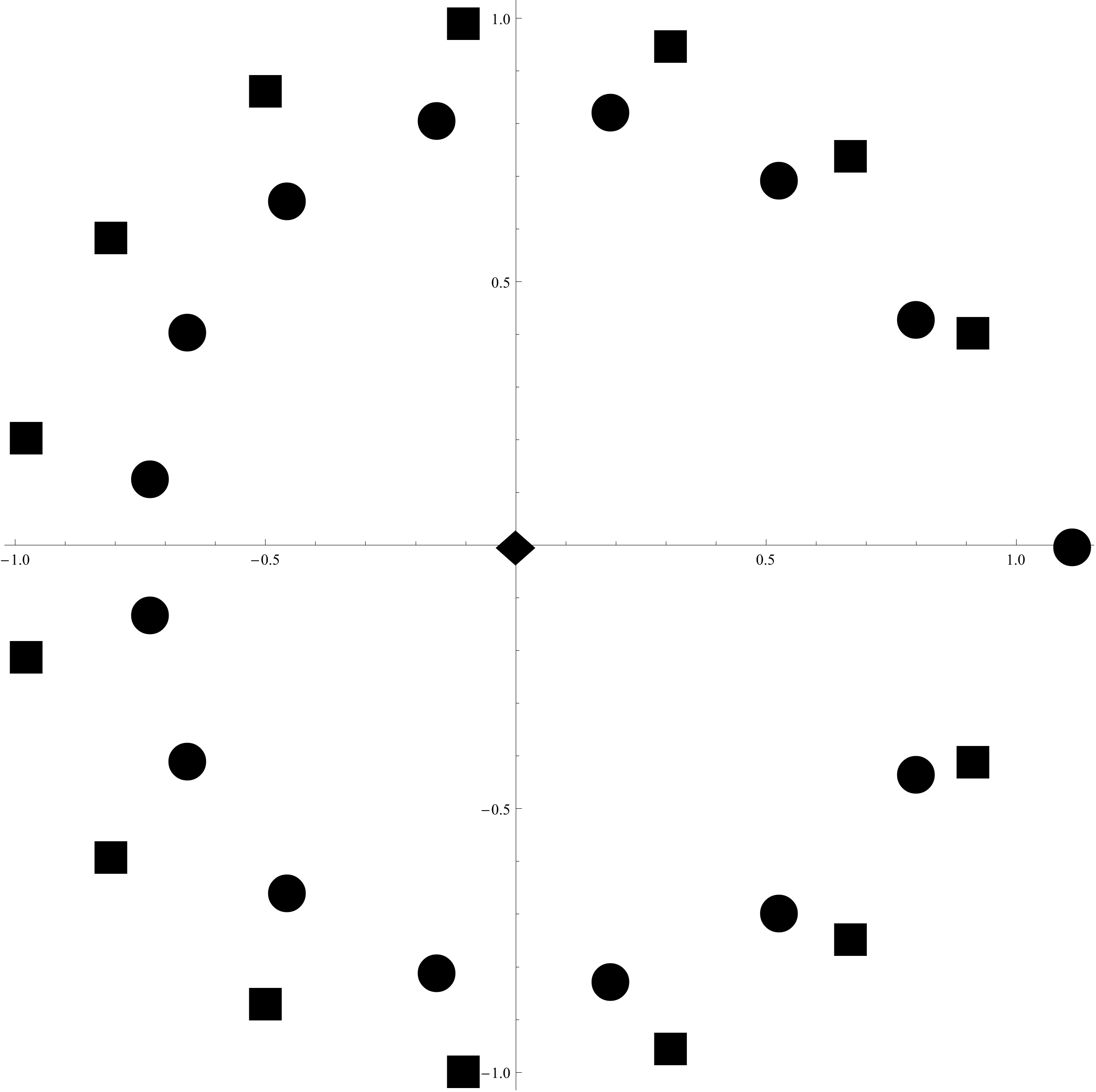}
  \caption{The squares show the location of the zeros of $\Phi_{14}(z;-1)$ and the circles show the zeros of $P_{1,14}(z)$ except $z=1$.  The diamond is located at the origin.}
\end{figure}

For the sake of comparison with the results in \cite{IW}, let us compute the differential equation from Theorem \ref{mainode} when setting $\beta=\alpha_{n-1}$ so that $\Phi_n(z;\beta)=\Phi_n(z)$.  This example was considered in \cite[Example 1]{IW} and it was shown there that the polynomial $\Phi_n(z)$ satisfies
\[
\Phi_n''(z)+\Phi_n'(z)\left(\frac{-n}{z}-\frac{3}{1-z}\right)+\Phi_n(z)\frac{2n}{z(1-z)}=0.
\]
In our calculations, we find (using Mathematica to simplify the expressions)
\[
h_n\left(z;\frac{-1}{n+1};\frac{-1}{n+1}\right)=\frac{n(z^{n+3}(n+1)^2-z^{n+2}(2n^2+6n+3)+z^{n+1}(n+2)^2-z-1)}{z^{n+1}(n+1)^3(z-1)^3}.
\]
One can then calculate
\begin{align*}
\frac{h_n'\left(z;\frac{-1}{n+1};\frac{-1}{n+1}\right)}{h_n\left(z;\frac{-1}{n+1};\frac{-1}{n+1}\right)}=\frac{1}{z}+\frac{3}{1-z}+\frac{(z^{n+2}(n+1)-z^{n+1}(n+2)+1)(z(n+1)+(n+2))}{z(z^{n+3}(n+1)^2-z^{n+2}(2n^2+6n+3)+z^{n+1}(n+2)^2-z-1)}
\end{align*}
and observe that the differential equation we derive is different than the one found in \cite{IW}.  As mentioned in Section \ref{intro}, this distinction is not unexpected.

\vspace{2mm}

If we define $G_n$, $J_n$, and $D_n$ for $z\in\bbD$, then for $n\geq2$ we have
\begin{align*}
G_n(z)&=\frac{-1}{n(n+1)}\left[\sum_{k=0}^{n-1}\left(z^k(n^2+n-(k+1)^2)\right)\right],\\
D_n(z)&=\frac{z}{n(n+1)}\left[\sum_{k=0}^{n-2}\left(z^k(k-3n+1)^2\right)\right],\\
J_n(z)&=\frac{-1}{n(n+1)}\left[n^2z^n+\sum_{k=0}^{n-1}\left(z^k((k+n+1)^2-2n-2n^2)\right)\right].
\end{align*}
With this knowledge of $G_n$, $D_n$, and $J_n$, we can write down $h_n(z;\beta;\beta)$ as a ratio of two polynomials, but the expression is extremely lengthy.  For notational convenience, we consider only the case $\beta=-1$:
\begin{align*}
h_n(z;-1;-1)=\frac{n(z^{n+2}+z^{n+1}-(n+2)z+n)}{(n+1)(z-1)}=:\frac{P_{1,n}(z)}{P_{2,n}(z)},
\end{align*}
where
\[
P_{1,n}(z)=n(z^{n+2}+z^{n+1}-(n+2)z+n),\qquad P_{2,n}(z)=(n+1)(z-1),
\]

If we write
\[
P_{1,n}(z)=n(z-1)\prod_{j=1}^{n+1}(z-p_{j,n}),
\]
then the differential equation of Theorem \ref{mainode} becomes
\begin{align*}
0=\Phi_n''(z;-1)+\left(\frac{1-n}{z}-\sum_{j=1}^{n+1}\frac{1}{z-p_{j,n}}\right)\Phi_n'(z;-1)+\frac{Q_{1,n}(z)}{Q_{2,n}(z)}\Phi_n(z;-1),
\end{align*}
where $Q_{1,n}$ and $Q_{2,n}$ are polynomials.  Reasoning as above, we conclude that particles of charge $+1$ located at the zeros of $\Phi_n(z;-1)$ are in total electrostatic equilibrium when the external field is generated by particles of charge $-1/2$ at each $p_{j,n}$ and a particle of charge $\frac{1}{2}(1-n)$ at the origin.  Figure 4 is a Mathematica plot illustrating this phenomenon when $n=14$.

\begin{figure}[h!]\label{ex4pic2}
  \centering
    \includegraphics[width=0.41\textwidth]{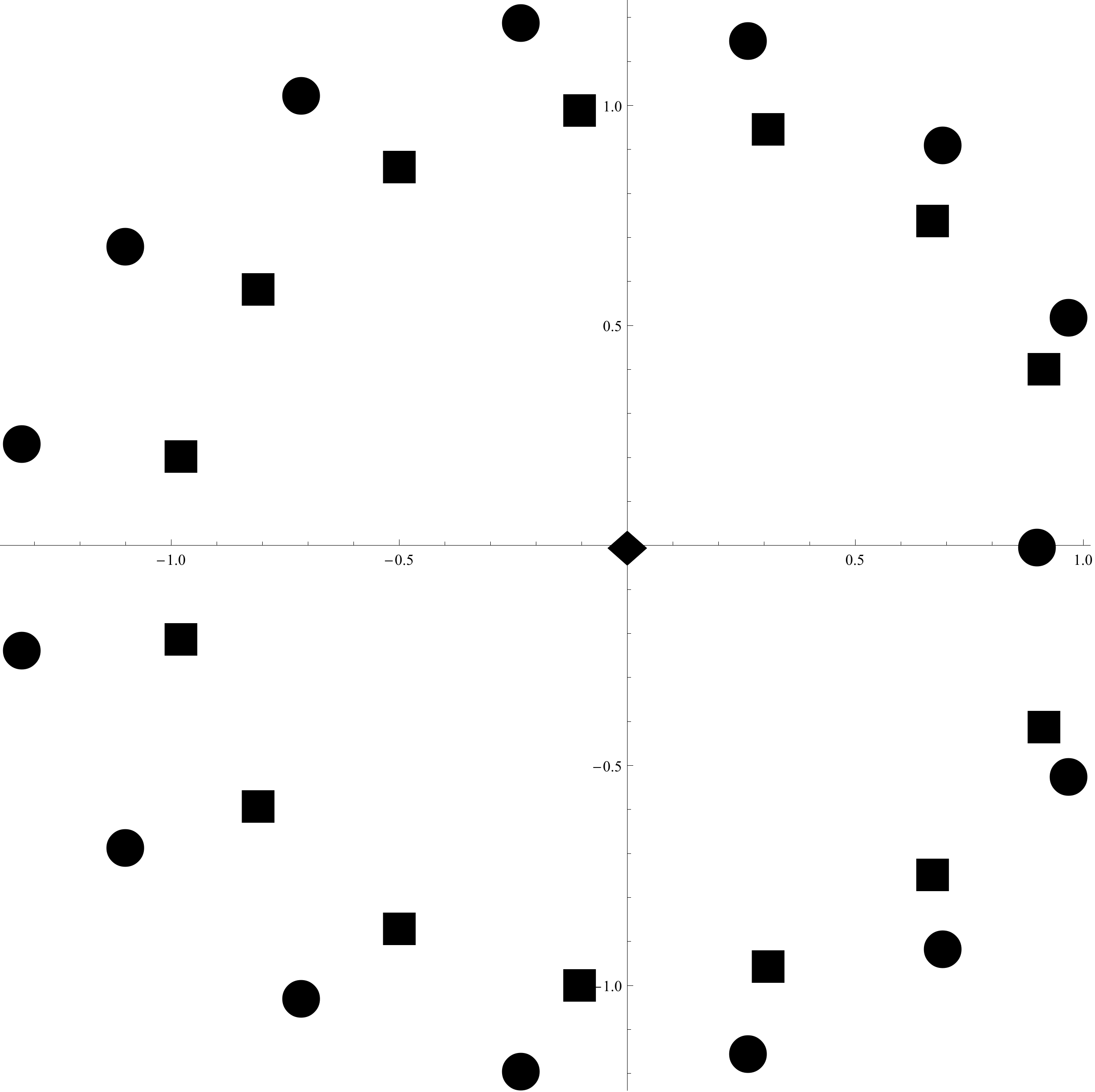}
  \caption{The squares show the location of the zeros of $\Phi_{14}(z;-1)$ and the circles show the zeros of $P_{1,14}(z)$ except $z=1$.  The diamond is located at the origin.}
\end{figure}

\newpage

\vspace{1mm}

\noindent \textsc{Brian Simanek, Baylor University Department of Mathematics}


\noindent \texttt{Brian$\_\,$Simanek$\MVAt$Baylor.edu}

\end{document}